\pgfplotsset{width=\columnwidth,height=7cm}
\newtheorem{theorem}{Theorem}
\newtheorem{lemma}[theorem]{Lemma}
\newtheorem{proposition}[theorem]{Proposition}
\newtheorem{corollary}[theorem]{Corollary}
\theoremstyle{definition}
\newtheorem{definition}[theorem]{Definition}
\newtheorem{orexample}{Example}
\newcommand*{\quotes}[1]{``#1''}
\newcommand*{\procedure}[1]{\texttt{{\sc #1}}}
\newcommand*{\Ordo}{O}
\newcommand*{\abs}[1]{{\lvert #1\rvert}}
\newcommand*{\union}{\bigcup}
\newcommand*{\set}[1]{\{\,#1\,\}}
\newcommand*{\defined}{\coloneqq}
\newcommand{\ind}[1]{\llbracket #1 \rrbracket}
\renewcommand{\P}{{\rm P}}              
\newcommand{\lb}{{\log_2}}              
\newcommand{\length}{{\ell}}
\newcommand{\SetB}{\mathbb{B}}          
\newcommand{\SetN}{\mathbb{N}}          
\newcommand*{\splus}{\!\!+\!\!}
\newcommand*{\sminus}{\!\!-\!\!}
\newcommand*{\sspace}{\ensuremath{X}\xspace}        
\newcommand*{\range}{\ensuremath{Y}\xspace}         
\newcommand*{\probcon}{{\sspace,\range}\xspace}     
\newcommand*{\cont}{{\sspace\range}}                  
\newcommand*{\sspaceset}{{\cal X}}                  
\newcommand*{\rangeset}{{\cal Y}}                   
\newcommand*{\tset}{{\ensuremath{\cal T}}\xspace}   
\newcommand*{\emptrace}{\langle\rangle}             
\newcommand*{\rset}{{\cal R}}                       
\newcommand*{\m}{{\ensuremath{\bf m}}\xspace}       
\newcommand*{\mf}{{\m_{\sspace\range}}}               
\newcommand*{\cmf}[2]{\cond{\mf{}}{#1}{#2}}         
\newcommand*{\perfcon}{M^\P_{\sspace\range}}        
\newcommand*{\mptm}{\ensuremath{M_{{\rm ot}}}\xspace}  
\newcommand*{\mptmcon}{M_{{\rm ot},\cont}}     
\newcommand*{\niah}{{\rm NIAH}}                                  
\newcommand*{\niahp}{\ensuremath{u_{\niah,\sspace\range}}\xspace} 
\newcommand*{\fK}[1]{\cK{#1}{\sspace,\range}}                    
\newcommand*{\pK}[1]{\cK{#1}{\sspace,\range}}                    
\newcommand*{\uniah}{u_{\niah}}
\newcommand*{\niahc}{\niah_{\sspace\range}}
\newcommand*{\fbad}{f_{{\rm bad}}}
\newcommand*{\cond}[4][]
{\mbox{\ensuremath{{#2}_{#1}(\mspace{1mu}#3\mspace{2mu}\vert\mspace{2mu}#4\mspace{1mu})}}}
\newcommand*{\cK}[3][]{\cond[#1]{K}{#2}{#3}}
\newcommand\copyrighttext{%
  \footnotesize
Published as: pp.\ 167--174, in
\emph{Proceedings of 2014 IEEE Congress on Evolutionary Computation (CEC)},
July 6-11, 2014, Beijing, China.
DOI: 10.1109/CEC.2014.6900546 \copyright 2014 IEEE}
\newcommand\copyrightnotice{%
\begin{tikzpicture}[remember picture,overlay]
\node[anchor=south,yshift=10pt] at (current page.south) {\fbox{\parbox{\dimexpr\textwidth-\fboxsep-\fboxrule\relax}{\copyrighttext}}};
\end{tikzpicture}%
}
\begin{document}

\date{}
\title{Free Lunch for Optimisation under the Universal Distribution}

\author[1]{Tom Everitt}
\author[2]{Tor Lattimore}
\author[3]{Marcus Hutter}

\affil[1]{Stockholm University, Sweden. Email: everitt@math.su.se}
\affil[2]{University of Alberta, Edmonton, Canada}
\affil[3]{Australian National University, Canberra, Australia}

\maketitle
\thispagestyle{empty}
\begin{abstract}
Function optimisation is a major challenge in computer science. The No Free Lunch theorems state that if all functions with the same histogram are assumed to be equally probable then no algorithm outperforms any other in expectation. We argue against the uniform assumption and suggest a universal prior exists for which there is a free lunch, but where no particular class of functions is favoured over another. We also prove upper and lower bounds on the size of the free lunch.
\end{abstract}
\setcounter{tocdepth}{1}

\vspace{.5em}
\noindent
{\it {\bf Keywords:}
No Free Lunch; Black-box Optimisation; Universal Distribution; Solomonoff induction; Kolmogorov complexity}

\copyrightnotice

\section{Introduction}

Finite black-box optimisation is the problem of finding an optimal value
(usually the maximum or minimum) of a target function
$f\colon\,X\to Y$ where $X$ and $Y$ are
finite. A wide range of tasks may be formulated in this setting.
For instance, drug-design may be viewed as
the task of finding a mix of chemicals that maximises recovery chances.
Since experimentation is expensive it is crucial that the best drug be found
as soon as possible.

It is desirable to find optimisation algorithms that perform well on
a wide variety of target functions, as this minimises the need for
fine-tuning the algorithm to the problem.
Indeed, several such algorithms exist and are regularly employed
in practice;
examples include hill-climbing and simulated annealing, as well
as genetic algorithms.
However, the theoretical understanding of the conditions permitting
such ``universal'' algorithms remains limited
\cite{Christensen2001,Streeter2003,Whitley2006,Jiang2011}.
To approach this problem, we derive bounds for expected
optimisation-performance under assumptions justified in all
(or virtually all) optimisation settings.

The original No Free Lunch (NFL) theorems state that when the global performance
of an optimisation algorithm is measured by taking a uniform average of its
performance over all functions from $X$ to $Y$, then no algorithm is
better than random  (assuming no point is sampled more than once)
\cite{Wolpert1997}.
The uniform assumption is justified by assuming the
absence of prior knowledge and the results are often used to claim that
no optimisation algorithm can be universal.

There is, however, another viewpoint. If we assume that the function
$f\colon\, X\to Y$ to be optimised is generated by some (unknown) computer
program,
then taking a uniform prior over programs is arguably more natural.
This is a reasonable assumption based on the commonly held view
that the universe is likely to be (stochastically) computable
\cite{Fredkin1992, Wolfram2002, Hutter2012}.
The distribution on functions induced by this approach is the famous universal
lower-semicomputable semi-distribution\footnote{
The use of lower semicomputable semi-distributions rather than regular
computable distributions is technical only and may be ignored by the reader
unfamiliar with algorithmic information theory.}
developed by Solomonoff and others \cite{Li2008}.
The universal distribution satisfies many nice properties,
both theoretical and philosophical. It is a natural choice when formalising Occam's razor in combination with
Epicurus' principle of multiple explanations since it favours simplicity over complexity without disregarding
the possibility that the truth is complex \cite{Hutter2005,Rathmanner2011}.
The universal distribution also exhibits a range of other desirable properties,
discussed further in Section \ref{sec:unidist}.

If performance is measured in expectation with respect to the universal
distribution, then the no free lunch theorems can no longer be applied.
Indeed, under some highly technical
conditions Streeter \cite{Streeter2003} showed that there is a free lunch
for optimisation under Solomonoff's universal distribution.
Tightly related to the universal distribution is
Kolmogorov complexity: Borenstein and Poli \cite{Borenstein2006}
discuss Kolmogorov complexity and optimisation, and also give a good
account of previous research in this area (see also \cite{McGregor2006}).
Several authors report
on Kolmogorov complexity not being perfectly related to searchability
\cite{Schumacher2001,Droste2002,Borenstein2006},
but except for \cite{Streeter2003}, implications for search performance
under the universal distribution have not been investigated.
The relation between the universal distribution and the NFL theorems
for supervised learning
has been studied by Lattimore and Hutter \cite{Lattimore2011}.
In sequence prediction and reinforcement learning, the universal distribution
has been extensively researched \cite{Hutter2005}.

We first improve on \cite{Streeter2003} by presenting the first easily interpretable theorem that
there is a free lunch if performance is measured in expectation with respect to Solomonoff's universal distribution
rather than the uniform distribution originally used by Wolpert and Macready
(Section~\ref{sec:ufl}).
Unfortunately the size of the free lunch turns out to be somewhat limited. Under only weak assumptions we show
that no computable algorithm can perform much better than random, even when performance is averaged with respect
to the universal distribution (Section~\ref{sec:upbound}). This result is then extended to arbitrary (possibly non-computable) optimisation algorithms for
a commonly used performance measure.

\section{Preliminaries}\label{sec:prelim}

A \emph{(finite binary) string} is a finite sequence
$x = b_1b_2\cdots b_n$ with $b_i \in \SetB = \left\{0,1\right\}$
and length $\ell(x) = n$.
The set of all finite binary strings is denoted by $\SetB^*$.
Strings may be concatenated in the obvious way. Power notation
is used to represent multiple concatenations: for example, $01^40 = 011110$.

A \emph{problem context} is a pair $X,Y$ of finite subsets of $\SetB^*$,
both containing at least $0$ and $1$ (to avoid degenerate cases).
In a problem context $X,Y$, the \emph{search space} is $\sspace$
and the \emph{range} is \range.
We let $\sspaceset$ and $\rangeset$ be the sets of all search spaces
and all ranges respectively.

\begin{definition}
An \emph{optimisation problem} is a collection $\P=\set{P_{XY}}$,
where $\P_\cont$ is a
measure over the finite set $\range^\sspace = \set{f\colon\,\sspace\to\range}$
of functions from \sspace to \range.
\end{definition}

A \emph{search trace $T_n$ on $\probcon$} is an ordered $n$-tuple
$\langle(x_{1},y_1),\dots,(x_{n},y_n)\rangle\in(\sspace\times\range)^n$,
representing a search history. 
The empty search trace will be denoted $\emptrace$.
Let $\tset_n(\sspace,\range)$ be the set of all search traces of length $n$,
and let
$\tset(\sspace,\range)\defined\union_{i=0}^\abs{\sspace}\tset_i(\sspace,\range)$
be the set of traces of any length.
Further, let
$\tset\defined\union_{\probcon\in\sspaceset\times\rangeset}\tset(\sspace,\range)$
be the set of search traces on any context.
If $T_n=\langle(x_{1},y_1),\dots,(x_{n},y_n)\rangle$, then
$T_n^x\defined\langle x_{1},\dots,x_{n}\rangle$  and
$T_n^y\defined\langle y_1,\dots,y_n\rangle$.

An  \emph{optimiser} is a function
$a\colon\,\sspaceset\times\rangeset\times\tset\to\sspaceset$
where $a(X,Y,T)\in X-T^x$ for all $(X,Y,T)$.
The optimiser selects new, unvisited search points in the search
space based on previously seen data and the problem context.
That the optimiser is only permitted to sample unvisited points is standard
in the literature, and non-restrictive in the noise-free setting
considered in this paper.

The setup is this: A problem context $\probcon$ is fixed, and a target function
$f\colon\,\sspace\to\range$ is sampled from $\range^\sspace$ according to the problem
distribution $\P_\cont$. The optimiser is initialised with the empty search
trace and the problem context, and outputs a search point $x_{1}\in\sspace$ by
$a(\sspace,\range,\langle\rangle)=x_{1}$.
The search trace becomes $\langle x_{1},f(x_{1})\rangle$.
The new search trace
is fed to the optimiser, which produces a new search point $x_{2}$ via
$a(\sspace,\range,\langle(x_{1},f(x_{1}))\rangle)$, and so on.
Observe that the search trace is a function of the optimiser, the problem
context and the sampled function $f$. We write $T_{XY}(a,f)$ for
the \quotes{full} trace of length $|X|$ that $a$ generates on $f$ and
$\probcon$; when $X,Y$ is clear from the context, $T(a,f)$ will suffice.
The $\range$-components $T^y_{XY}(a,f)$
will be called the \emph{result vector} of $a$ on $f$ and $\probcon$.
We will also use $R$ to denote result vectors.
Let $\rset(\probcon)$ be the set of all result vectors on \probcon, and
let $\rset=\union_{\probcon\in\sspaceset\times\rangeset}\rset(\probcon)$.

The performance of an optimiser $a$ on a problem $\P$ is measured in
terms of the result vectors it produces.
A function \mbox{$M\colon\,\sspaceset\times\rangeset\times\rset\to[0,\infty)$}
defines a \emph{performance measure} by the $\P$-expected value of $M$ for
$a$ on each problem context \probcon:
\begin{equation}\label{eq:perfdef}
\perfcon(a)\defined\sum_{f\in \range^\sspace}\P_\cont(f)M_\cont(T^y(a,f))
\end{equation}

We use $\ind{s}$ for the Iverson bracket that is 1 when $s$ is true, and 0
otherwise.
For any list $R$, $R[i]$ extracts its $i$th element.

\section{The universal distribution}\label{sec:unidist}

We now give a short introduction to Kolmogorov complexity and the universal
distribution. Detailed references are \cite{Li2008} and \cite{Cal02}.

Prefix codes are central elements in algorithmic information theory.
A \emph{prefix code} is a set of  \emph{code words} (formally, strings)
where no code word is a prefix of another.
This makes prefix codes \emph{uniquely decodable}:
in a sequence of appended code words it is possible to tell where
one code word ends another begins.
Kraft's inequality gives a lower bound on the length of the code words
in a prefix code \cite[p.~76]{Li2008}.
Definition~\ref{def:strenc} gives some
commonly used prefix encodings of strings, numbers, lists and functions.

\begin{definition}[String encodings]\label{def:strenc}
Let $x$ be a binary string, $n$ a natural number and
$Z=z_1,\dots,z_n$ a list of strings.
Then $\bar x \defined 1^{\ell(x)}0x$, $\bar n\defined 1^n0$ and
$\bar Z\defined \bar n \bar z_1 \cdots \bar z_n$  defines
prefix codes for $x$, $n$ and $Z$. The code for lists
may be applied recursively to lists of lists. Target functions $f\colon\,X\to Y$
are encoded by lists $f(x_1),\dots,f(x_n)$ where $x_1,\dots,x_n$ are
the elements of $X$ in order.
\end{definition}

For technical reasons, regular Turing machines are not suitable for
defining the universal distribution, so prefix machines are
often used instead.
\begin{definition}[Prefix Machines]
A \emph{prefix machine} $V$ is a Turing machine with one
 unidirectional input tape,
one unidirectional output tape, and some bidirectional work tapes.
Input tapes are read only, output tapes are write only.
All tapes are binary and work tapes are initialised with zeros.
We say $V$ halts with output $x$ on input $p$ given $s$ and write $V(p|s)=x$,
if $\bar s p$ is to the left of the input head
and $x$ is to the left of the output head when $V$ halts.
For any $s\in \SetB^*$, the inputs on which $V(\cdot|s)$ halts
form a prefix code.
Also, just as for regular Turing machines, there are
\emph{universal prefix machines} that can simulate
any other prefix machine.
\end{definition}

\begin{definition}[Prefix Complexity]\label{def:K}
Let $x,y \in \SetB^*$ be finite binary strings and $U$ a universal
prefix machine, then the \emph{Kolmogorov complexity of $x$ conditioned on
$y$} is the length of the shortest program that given $y$ outputs $x$.
\begin{equation}\label{eq:defK}
K_U(x|y) \defined \min\set{\ell(p): U(p|y) = x}
\end{equation}
A simple but fundamental theorem is that $K_U$ depends on $U$ only up
to constant factors, so
from now on, as is usual in algorithmic information theory, we fix
an arbitrary universal prefix machine as a \emph{reference machine}
and simply write $K(x)$ for $K_U(x)$.
\end{definition}

\begin{definition}[Function complexity]
Let $f\colon\, X\to Y$, then the \emph{complexity of $f$} is $K(f|X,Y)$,
with $f$ and $X,Y$ encoded by strings according to Definition~\ref{def:strenc}.
\end{definition}

The Martin-Löf--Chaitin Thesis states that randomness may be defined as
incompressibility \cite[p.~705]{Gabbay2011}. A target function is
 \emph{incompressible} or  \emph{random} if
$K(f|\sspace,\range) \geq |\sspace|\log |\range|$.
A classical result in algorithmic information theory shows that almost all
functions are (nearly) random.
Thus, the uniform distribution puts the  majority of its weight on random
functions, which is one explanation for why it is hard to optimise under
the uniform distribution.
In contrast, the universal distribution puts more weight on ``simple'',
non-random functions:
\begin{definition}[Universal distribution]\label{def:unidist}
For each context $X,Y$, the \emph{universal distribution} is defined as
\begin{align}
\label{eq:m1}
\mf(f) := c_\mf\cdot 2^{-K(f|\sspace,\range)},
\end{align}
where $c_\mf=1/\sum_{f\colon\sspace \to\range} 2^{-K(f|\sspace,\range)}$ is just a
normalising constant.
In the literature, unnormalised versions of \m are often considered.
Although $c_\mf$ may fluctuate with $X,Y$, there is a
constant $c_\m$ depending only on the reference machine such that
$1\leq c_\mf\leq c_\m$ for all $X,Y$.
Note that \m is an optimisation problem, since $\mf$ is a distribution
over $Y^X$ for each $X,Y$.
\end{definition}

Somewhat surprisingly, there is an equivalent definition of \m as the
distribution obtained by feeding random coin-flips into a universal
prefix machine with access to $X,Y$.
\begin{align}
\label{eq:m2}
\m_{XY}(f)\approx \sum_{p\in \SetB^*}2^{-\ell(p)} \ind{V(p|X,Y)=f}.
\end{align}
The approximation holds up to irrelevant multiplicative factors, so (\ref{eq:m2}) is often
used in place of (\ref{eq:m1}) as the definition of the universal distribution.\footnote{Even the definition
given in (\ref{eq:m1}) depends on the choice of reference machine up to multiplicative factors.}
Feeding a universal prefix-machine random coin-flips is a natural
formalisation of the uniform prior over computer programs advocated
in the introduction. Thus \m may be justified as a
\emph{subjective prior} for the assumption that
the target function has been computably generated.\footnote{
There are also ``objective'' grounds to prefer \m as a prior, including
\emph{regrouping invariance} \cite{Hutter2007} and \emph{dominance}
\cite{Li2008}. Neither hold for the uniform distribution.}

The bias away from randomness also aligns with our intuition of how
functions ``ought to be optimised''. If the first hundred observations are
predicted by a simple polynomial, then common sense (and Occam's razor)
suggests that the best prediction of unseen points is that they follow the
polynomial. In general, the ``simplest'' structure perceivable in the
data should be the most likely extrapolation. The universal distribution
is consistent with this intuition.
A detailed discussion of the philosophical justification for the universal
prior can be found in \cite{Rathmanner2011}.

To summarise, we have argued for the universal distribution on the
following grounds:
\begin{itemize}
\item (Weak assumptions): If the target function is generated by a computer
program, then a uniform prior over computer programs is justified.
Formalised as in \eqref{eq:m2}, this yields the universal distribution.
\item (Downweighs randomness): A uniform prior over target functions
puts the (vast) majority of the weight on (nearly) random functions.
The universal distribution concentrates on structured functions, without
favouring any particular class of functions.
\item (Aligns with Occam): Intuition and Occam's razor
suggests that the best extrapolation is the continuation of
the ``simplest'' pattern observable in data, which corresponds well
with the relative probabilities of the universal distribution.
\end{itemize}
Next we will present some background on the NFL theorems
and introduce our performance measure \mptm, before taking a closer
look at optimisation under \m.

\section{No Free Lunch}
The NFL theorems provide important insights into the possibility of
universal optimisation.
They show that for certain distributions $\P_\cont$ all
optimisers perform identically with respect to
some (or all) performance measure(s). This is often phrased as
\quotes{there is no free lunch available for $\P_\cont$}.
For example, if NFL holds for a performance measure depending on how many
function evaluations are required to find the maximum, then
this implies that in expectation a hill-climbing optimiser
will find the maximum as slowly as a hill-descending optimiser.

\begin{definition}[Performance measure-NFL]\label{def:pmnfl}
\emph{NFL} holds for a distribution $\P_\cont$ and a performance measure $M$
 if $\perfcon(a)=\perfcon(b)$ for all optimisers
$a$ and $b$. If NFL holds for \emph{all} performance measures $M$, then
NFL simply holds for $\P_\cont$.
If NFL does not hold for $\P_\cont$ (and $M$), then we say that there is
\emph{free lunch} for $\P_\cont$ (and $M$).
\end{definition}

The stronger statement that NFL holds for all performance measures may
equivalently be defined in terms of result vectors.
The proof of the equivalence is a straightforward application
of the definitions.

\begin{lemma}[Result vector-NFL]\label{le:rvnfl}
Let $\P_{\cont a}(R)$ 
be the probability $\sum_{f\in\range^\sspace}\P_\cont(f)\ind{T^y(a,f)=R}$
that an optimiser $a$ generates the result vector $R$,
then NFL holds for $\P_\cont$ if and only if $\P_{\cont a}(R)=\P_{\cont b}(R)$
for every pair of optimisers $a$ and $b$ and every result vector
$R\in Y^{|X|}$.
\end{lemma}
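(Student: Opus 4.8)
The plan is to prove the biconditional in Lemma~\ref{le:rvnfl} by showing each direction separately, with the key observation being that the performance measure $\perfcon(a)$ can be rewritten as an expectation over result vectors rather than over functions directly.

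First I would regroup the defining sum~\eqref{eq:perfdef} according to which result vector is produced. Since each function $f$ gives rise to exactly one result vector $T^y(a,f)=R$ under the optimiser $a$, I can partition $\range^\sspace$ by the value of $T^y(a,f)$ and collect terms, obtaining
\begin{equation*}
\perfcon(a)=\sum_{R\in Y^{|X|}}M_\cont(R)\sum_{f\in\range^\sspace}\P_\cont(f)\ind{T^y(a,f)=R}=\sum_{R\in Y^{|X|}}\P_{\cont a}(R)\,M_\cont(R).
\end{equation*}
This identity is the crux: it expresses performance purely through the result-vector distribution $\P_{\cont a}$ and the measure $M_\cont$, so that two optimisers with identical result-vector distributions must have identical performance on \emph{every} $M$.

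For the direction ($\Leftarrow$), suppose $\P_{\cont a}(R)=\P_{\cont b}(R)$ for all $R$ and all contexts. Then the rewritten formula immediately gives $\perfcon(a)=\perfcon(b)$ for every performance measure $M$, which is exactly NFL for $\P_\cont$ by Definition~\ref{def:pmnfl}. For the direction ($\Rightarrow$), suppose NFL holds, i.e.\ $\perfcon(a)=\perfcon(b)$ for all $M$; I want to conclude $\P_{\cont a}=\P_{\cont b}$ as functions on result vectors. The natural move is to choose, for a fixed target vector $R_0$, the indicator performance measure $M_\cont(R)\defined\ind{R=R_0}$. Plugging this into the rewritten formula collapses the sum to a single term, yielding $\perfcon(a)=\P_{\cont a}(R_0)$ and likewise for $b$, so $\P_{\cont a}(R_0)=\P_{\cont b}(R_0)$; since $R_0$ was arbitrary this gives equality of the distributions.

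The main obstacle, and the only point requiring care, is the quantifier structure around the problem context. NFL in Definition~\ref{def:pmnfl} is stated per context $\cont$, and a performance measure $M$ is a single function defined across all contexts, whereas the indicator $\ind{R=R_0}$ above is tied to one context. I would address this by noting that fixing a context and choosing $M$ to be the indicator of $R_0$ on that context (and, say, zero elsewhere) is a legitimate performance measure in the sense of Section~\ref{sec:prelim}, so the selection argument goes through context by context. Everything else is a routine application of the definitions, exactly as the paper anticipates.
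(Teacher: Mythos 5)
Your proof is correct, and it is essentially the argument the paper has in mind: the paper omits an explicit proof, calling the equivalence ``a straightforward application of the definitions,'' and your regrouping of \eqref{eq:perfdef} over result vectors together with the indicator-measure choice $M_\cont(R)=\ind{R=R_0}$ (including your correct handling of the per-context quantifier by setting $M$ to zero on other contexts) is exactly that routine application. No gaps.
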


\subsection{The NFL theorems}
Igel and Toussaint \cite{Igel2004} showed that the precise condition for NFL
is \emph{block uniformity} of $\P_{\cont}$.
\begin{definition}[Block uniformity]\label{def:block}
A \emph{histogram} for a function $f$ is a function $h_f\colon\,\range\to\SetN$
defined as $h_f(y)=|f^{-1}(y)|$,
indicating how many $x$'s map to every $y$.
The subset of all functions $X\to Y$ with histogram $h$
is called the \emph{base class} of $h$, and is denoted $B_h$.
The distribution $\P_{XY}$ is \emph{block uniform} if for every
$h$ it holds that $f,g\in B_h\implies \P_\cont(f)=\P_\cont(g)$.
\end{definition}

\begin{theorem}[Non-uniform NFL \cite{Igel2004}]\label{th:nonuninfl}
NFL holds for $\P_\cont$ if and only
if $\P_\cont$ is block uniform.
\end{theorem}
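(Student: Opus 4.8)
The whole proof turns on one structural observation, which I would establish first: for any (deterministic) optimiser $a$ and any result vector $R=(y_1,\dots,y_n)\in\range^{\abs{\sspace}}$ (writing $n=\abs{\sspace}$), there is exactly one function $f$ with $T^y(a,f)=R$. Indeed, running $a$ against the prescribed values determines the query points one at a time — $x_1=a(\sspace,\range,\emptrace)$, then $x_2=a(\sspace,\range,\langle(x_1,y_1)\rangle)$, and so on — and since $a$ never repeats a point while the full trace has length $\abs{\sspace}$, the $x_i$ exhaust $\sspace$. Hence $f$ is forced to be $f(x_i)=y_i$, and its histogram equals the histogram $h_R$ of the multiset $\{y_1,\dots,y_n\}$, so $f\in B_{h_R}$. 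Consequently $\P_{\cont a}(R)=\P_\cont(f)$ is just the probability of this single function.

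For the direction ``block uniform $\Rightarrow$ NFL'', I would assume $\P_\cont$ takes a constant value $p_h$ on each base class $B_h$. By the observation, for every $R$ and every $a$ the quantity $\P_{\cont a}(R)$ equals $\P_\cont$ of the unique producing function, which lies in $B_{h_R}$ and therefore equals $p_{h_R}$. This value depends on $R$ but not on $a$, so $\P_{\cont a}(R)=\P_{\cont b}(R)$ for all optimisers $a,b$ and all result vectors $R$, and Lemma~\ref{le:rvnfl} yields NFL.

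For the converse I would assume NFL and fix $f,g\in B_h$, aiming to show $\P_\cont(f)=\P_\cont(g)$. Enumerate $\sspace$ as $x_1,\dots,x_n$ and set $R=(f(x_1),\dots,f(x_n))$. Let $a$ be the optimiser that ignores all feedback and queries $x_1,\dots,x_n$ in order; by the observation $f$ is the unique function producing $R$ under $a$, so $\P_{\cont a}(R)=\P_\cont(f)$. Since $f,g\in B_h$ have the same histogram, the multisets of their values coincide, so there is a permutation $\pi$ of $\sspace$ with $g(\pi(x_i))=f(x_i)$ for all $i$. Let $b$ be the feedback-ignoring optimiser that queries $\pi(x_1),\dots,\pi(x_n)$; then $T^y(b,g)=R$ and $g$ is the unique producing function, so $\P_{\cont b}(R)=\P_\cont(g)$. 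NFL forces $\P_{\cont a}(R)=\P_{\cont b}(R)$, hence $\P_\cont(f)=\P_\cont(g)$; as $f,g,h$ were arbitrary, $\P_\cont$ is block uniform.

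The one genuinely load-bearing step is the structural observation — the bijection between result vectors and producing functions induced by a deterministic optimiser — after which both implications are short. The only place needing care is the permutation argument above: I would verify explicitly that two functions with equal histograms can be driven to a common result vector by appropriately ordered blind enumerations, which is precisely where equality of the value-multisets is used.
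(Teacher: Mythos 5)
Your proof is correct, but note that the paper itself offers no proof of Theorem~\ref{th:nonuninfl}: it is imported by citation from Igel and Toussaint, so there is no internal argument to compare against. What can be said is that your proof reconstructs the standard one, and that its two halves align with machinery the paper does make explicit. Your load-bearing observation --- that a deterministic, non-repeating optimiser induces a bijection $f \mapsto T^y(a,f)$ between $\range^\sspace$ and full-length result vectors, preserving histograms --- is exactly the duality underlying Lemma~\ref{le:numfun}, and your forward direction (block uniform $\Rightarrow$ $\P_{\cont a}(R)=p_{h_R}$ independent of $a$) is the standard argument. Your converse is, almost verbatim, the contrapositive of the paper's own proof of Proposition~\ref{pr:enum}: your two feedback-ignoring enumerations $a$ and $b$ querying $x_1,\dots,x_n$ and $\pi(x_1),\dots,\pi(x_n)$ are the optimisers $e$ and $e_\sigma$ there, with your $\pi$ playing the role of the permutation $\sigma$ relating $f$ and $\sigma\!f$ in a common base class. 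One dependency worth flagging: both directions of your argument route through Lemma~\ref{le:rvnfl}, which the paper states without proof; for full self-containment you would want to note that the direction ``NFL for all measures $\Rightarrow$ equal result-vector probabilities'' follows by taking, for each fixed $R_0$, the performance measure $M_\cont(R)=\ind{R=R_0}$, whose expectation under \eqref{eq:perfdef} is precisely $\P_{\cont a}(R_0)$. This is a one-line addition, not a gap, and with it your proof stands as a complete and correct substitute for the cited result.
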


The original NFL theorem by Wolpert and Macready \cite{Wolpert1997} showed
that NFL holds when $\P_{\cont}$ is uniform on $\range^\sspace$. As uniform
distributions are a special case of block uniform distributions, Wolpert
and Macready's result follows from Igel and Toussaint's.

Another special case is the NFL theorem for uniform optimisation problems
over function classes closed under
permutation (c.u.p.)\ by Schumacher et al.\ \cite{Schumacher2001}.
A \emph{permutation} is a bijective function $\sigma\colon\,X\to X$ that permutes
functions via $(\sigma\! f)(x)=f(\sigma^{-1}(x))$.
A class $F\subseteq \range^\sspace$ is \emph{c.u.p.}\ if
$f\in F\implies \sigma\!f\in F$ for all permutations $\sigma$.
The uniform distribution $u_F$ over $F$ is defined
by $u_F(f) \defined 1/|F|$ if $f \in F$ and $0$ otherwise.

\begin{theorem}[NFL for c.u.p.\ classes \cite{Schumacher2001}]\label{th:cupnfl}
If $\P_\cont$ is the uniform distribution over a class
$F\subseteq \range^\sspace$,
then NFL holds for $\P_\cont$ if and only if $F$ is c.u.p.
\end{theorem}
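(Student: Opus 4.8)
The plan is to prove Theorem~\ref{th:cupnfl} by invoking the characterisation already established in Theorem~\ref{th:nonuninfl}, reducing the claim to a combinatorial statement about how permutations act on histograms. First I would observe that, by Theorem~\ref{th:nonuninfl}, NFL holds for $u_F$ if and only if $u_F$ is block uniform, so the entire proof amounts to showing that the uniform distribution over $F$ is block uniform exactly when $F$ is closed under permutation. This recasts the problem into checking the equivalence
\begin{equation}\label{eq:cupplan}
u_F \text{ is block uniform} \iff F \text{ is c.u.p.}
\end{equation}
and the work is entirely about the interplay between the base classes $B_h$ and the permutation action.

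The key lemma I would isolate is that two functions $f,g\colon X\to Y$ share the same histogram, $h_f = h_g$, if and only if $g = \sigma\!f$ for some permutation $\sigma$ of $X$. The forward direction of this lemma is the crux: if $h_f = h_g$, then for each $y\in Y$ the preimages $f^{-1}(y)$ and $g^{-1}(y)$ have the same cardinality, so I can build a bijection $\sigma$ of $X$ by matching up these equal-sized preimage blocks, yielding $g=\sigma\!f$. The reverse direction is immediate, since applying $(\sigma\!f)(x)=f(\sigma^{-1}(x))$ merely relabels the domain and leaves each $|f^{-1}(y)|$ unchanged. Consequently every base class $B_h$ is precisely a single orbit of the permutation group acting on $\range^\sspace$.

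With that orbit characterisation in hand, I would prove both directions of~\eqref{eq:cupplan}. For the ``if'' direction, assume $F$ is c.u.p.; then for any $f\in F$ and any $g$ with $h_g=h_f$, the lemma gives $g=\sigma\!f\in F$, so $g\in F$ as well, and hence $F$ contains either all or none of each base class. On any base class that $F$ meets, $u_F$ is constant (equal to $1/|F|$) and so is trivially block uniform. For the ``only if'' direction I argue the contrapositive: if $F$ is not c.u.p., there exist $f\in F$ and $\sigma$ with $\sigma\!f\notin F$; since $h_{\sigma\!f}=h_f$, the two functions lie in a common base class on which $u_F$ assigns $1/|F|$ to $f$ but $0$ to $\sigma\!f$, violating block uniformity.

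The main obstacle is the explicit construction of the matching permutation $\sigma$ in the forward direction of the lemma, making sure it is a single well-defined bijection of $X$ rather than a family of block bijections; the care lies in simultaneously reconciling the per-value preimage matchings into one global permutation and in verifying $(\sigma\!f)=g$ using the convention $(\sigma\!f)(x)=f(\sigma^{-1}(x))$. Everything downstream is then a routine case split, so once the orbit-equals-base-class lemma is secured the theorem follows directly.
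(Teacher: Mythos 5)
Your proof is correct: the key lemma that base classes are exactly the permutation orbits holds (matching the equal-cardinality preimages $f^{-1}(y)$ and $g^{-1}(y)$ across all $y\in Y$ assembles into a single global permutation $\sigma$ with $g=\sigma\! f$), and both directions of the equivalence between block uniformity of $u_F$ and $F$ being c.u.p.\ then follow as you describe. The paper itself offers no proof---it cites Schumacher et al.\ and explicitly frames the theorem as a special case of Theorem~\ref{th:nonuninfl}---so your reduction via block uniformity is precisely the route the paper gestures at, with the details correctly filled in.
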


A simple consequence of the NFL theorems is that all optimisers produce
the same result vectors. We state this as a lemma for future reference.
\begin{lemma}[\cite{Schumacher2001}]\label{le:numfun}
The set of result vectors
$\set{R\in\rset(X,Y): T^y(a,f)=R\text{ for some }f\in Y^X}$
ever produced by an optimiser $a$ on $X,Y$ is the same for all optimisers.
\end{lemma}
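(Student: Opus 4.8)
The plan is to show that the set of realisable result vectors depends only on the context $X,Y$ and not on the particular optimiser $a$. The essential idea is that for any two optimisers $a$ and $b$, the full search trace $T_{XY}(a,f)$ and the set of $x$-points it visits are completely determined once we observe the sequence of $y$-values returned, together with the deterministic decisions the optimiser makes. Since the target function $f$ ranges over \emph{all} of $Y^X$, and an optimiser is forced by definition to sample each of the $|X|$ points of $X$ exactly once before the full trace terminates (it always returns a fresh point in $X-T^x$), every optimiser visits all of $X$ in some order. The result vector is then just the multiset of $y$-values read off in that order, so the reachable result vectors should coincide across optimisers.

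First I would fix a context $X,Y$ and an arbitrary result vector $R\in\rangeset(X,Y)$ that optimiser $a$ produces, so $T^y_{XY}(a,f)=R$ for some $f\in Y^X$. Next, given a second optimiser $b$, I would construct a function $g\in Y^X$ that \emph{forces} $b$ to produce the same $R$. The construction is inductive on the length of the trace: I run $b$ on the empty trace to obtain its first query point $x'_1=b(X,Y,\langle\rangle)$, and define $g(x'_1)\defined R[1]$. Feeding the trace $\langle(x'_1,R[1])\rangle$ back to $b$ yields $x'_2$, and I set $g(x'_2)\defined R[2]$, and so on. At each step $b$ is guaranteed to pick a point not yet queried (by the optimiser constraint $b(X,Y,T)\in X-T^x$), so the points $x'_1,x'_2,\dots$ are distinct and after $|X|$ steps exhaust $X$; hence $g$ is well-defined on all of $X$. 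By construction $T^y_{XY}(b,g)=R$, so $R$ is realisable by $b$ as well.

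By symmetry (swapping the roles of $a$ and $b$) this shows the two sets of realisable result vectors are equal, and since $b$ was arbitrary the set is the same for every optimiser. The statement follows.

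The main obstacle, and the one point that genuinely needs care, is verifying that the inductively defined $g$ is a total and consistent function: I must confirm that the query points $x'_1,\dots,x'_{|X|}$ produced during the simulation are pairwise distinct (so no conflicting values are assigned) and that they cover all of $X$ (so $g$ is total). Both facts follow from the structural requirement $b(X,Y,T)\in X-T^x$ together with the length-$|X|$ termination of full traces, but this is exactly where the non-revisiting assumption on optimisers is essential and should be invoked explicitly.
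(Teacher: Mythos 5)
Your proof is correct, and it is essentially the standard argument from Schumacher et al.\ that the paper cites for this lemma: given a result vector $R$ realised by $a$, you simulate $b$ feeding it the entries of $R$ in order, and the non-revisiting constraint $b(X,Y,T)\in X-T^x$ guarantees the induced function $g$ is total and well-defined, whence $T^y_{XY}(b,g)=R$. Your closing remark correctly isolates the one step needing care (distinctness and exhaustion of the query points); the only blemish is notational---the set of result vectors is $\rset(X,Y)$, not $\rangeset(X,Y)$, and the result vector is an ordered tuple rather than a multiset---neither of which affects the argument.
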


The NFL theorems have also been investigated in infinite and continuous
domains. Depending on the generalisation, free lunches may or may not emerge
in those settings \cite{Auger2007, Rowe2009}.

\section{Performance measures}\label{sec:partmeas}
So far we have only considered problems for which either NFL holds for
all performance measures, or for which a free lunch is
available for some performance measures.
Often, however, we are interested in performing well under a fixed
performance measure of interest.
One natural choice of performance measure  is \emph{optimisation time},
which in this context means the number of function evaluations required
to find the maximum.\footnote{
In black-box optimisation in general, and evolutionary algorithms in particular,
the evaluation of the target function typically constitutes the main expense of
computation time. This is the motivation behind the name
\emph{optimisation time} for the number of target function
evaluations.}

\begin{definition}
The \emph{optimisation time performance measure} \mptm is defined as
\begin{align*}
\mptmcon(R)\defined&\;\min_i\left(R[i]=\max Y\right),\\
\mptmcon^\P(a)\defined&\sum_{f\in\range^\sspace}\P_\cont(f)\mptmcon(T^y(a,f))
\end{align*}
for result vectors and optimisers, respectively.
Under $\mptm$ a low score is  better than a high score.
\end{definition}

A variety of performance measures have been considered in the literature.
Some use properties of the $k$ first function evaluations, for example
the number of values exceeding a certain threshold
\cite{Christensen2001,Whitley2006,Jiang2011}, or
the probability that some seen value exceed the threshold
\cite{Wolpert1997}.
Griffiths and Orponnen
\cite{Griffiths2005} use a performance measure $M_{\max}$ depending on
the size of the greatest value of the first $k$ observations.
Others, such as \cite{Borenstein2006, Jansen2013}, use \mptm.
The main reason we prefer \mptm to the other alternatives
is that it is better suited for the asymptotic results we will aim for.

Results about particular performance measures often have
greater practical interest than their arbitrary-measure counterparts.
In addition, particular performance measures may also have theoretical
interest.
Under particular performance measures, NFL may
hold for  classes that are not c.u.p.\ \cite{Griffiths2005}.
This does not contradict Theorem \ref{th:cupnfl},
which only claims that for every non-c.u.p.\ class, there is
\emph{some} performance measure permitting a free lunch.
Indeed, it is unsurprising that NFL will apply to wider ranges of function
classes when a fixed performance measure is used.
The conditions for NFL under Griffiths and Orponnen's performance
measure $M_{\max}$ turn out to be significantly more intricate compared
to the standard NFL case \cite{Griffiths2005}.

Another difference is found in the ``cleverness'' required to exploit a
free lunch.
Optimisers that choose the next point to probe irrespective of
previous observations are called \emph{non-adaptive}; such optimisers
can only exhibit a limited amount of sophistication.
Proposition \ref{pr:enum} shows that when a free lunch is available
and arbitrary measures are allowed, then
there is free lunch for a non-adaptive optimiser.
In contrast, under particular
measures such as $\mptm$ and $M_{\max}$, adaptive optimisers may
differ in performance while all non-adaptive optimisers perform the same.
In this sense, ``smarter'' algorithms may be required for exploiting a free
lunch when using  a particular performance measure,
compared to when arbitrary performance measures are permitted.

\begin{proposition}\label{pr:enum}
If NFL does not hold for a distribution $\P_\cont$,
then there is free lunch for a non-adaptive optimiser under
some performance measure.
\end{proposition}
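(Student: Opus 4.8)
The plan is to read off the required structure directly from block uniformity. Since NFL does not hold for $\P_\cont$, Theorem~\ref{th:nonuninfl} tells us that $\P_\cont$ is not block uniform, so there is a histogram $h$ and two functions $f,g\in B_h$ with $\P_\cont(f)\neq\P_\cont(g)$. Having the same histogram means $|f^{-1}(y)|=|g^{-1}(y)|$ for every $y\in\range$, so by bijecting $f^{-1}(y)$ with $g^{-1}(y)$ for each $y$ one obtains a permutation $\sigma\colon\sspace\to\sspace$ with $g=\sigma f$ (i.e.\ $g(x)=f(\sigma^{-1}(x))$). Thus $\P_\cont(f)\neq\P_\cont(\sigma f)$.

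Next I would pit two non-adaptive optimisers against each other. Fix the enumeration $x_1,\dots,x_n$ of $\sspace$ (with $n=|\sspace|$) and let $a$ be the non-adaptive optimiser that probes the points in this order, while $b$ probes them in the order $\sigma(x_1),\dots,\sigma(x_n)$; both ignore the observed $\range$-values and are therefore non-adaptive. Since each optimiser visits every point exactly once, the map sending a function to the full result vector it produces is a bijection, so each result vector is generated by exactly one function. Let $R^*\defined\langle f(x_1),\dots,f(x_n)\rangle$ be the result vector that $a$ produces on $f$, so $\P_{\cont a}(R^*)=\P_\cont(f)$. The only function on which $b$ outputs $R^*$ is the one satisfying $f'(\sigma(x_i))=f(x_i)$ for all $i$, i.e.\ $f'=\sigma f=g$; hence $\P_{\cont b}(R^*)=\P_\cont(g)$.

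Finally I would isolate $R^*$ with a tailored performance measure. Define $M_\cont(R)\defined\ind{R=R^*}$. Plugging into the definition of $\perfcon$ gives $\perfcon(a)=\P_{\cont a}(R^*)=\P_\cont(f)$ and $\perfcon(b)=\P_{\cont b}(R^*)=\P_\cont(g)$, which differ by the choice of $f$ and $g$. Hence NFL fails for $\P_\cont$ under $M$, and since $a$ (and $b$) are non-adaptive this is precisely a free lunch for a non-adaptive optimiser, as required.

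The step needing the most care is the permutation bookkeeping in the middle paragraph: one must keep the direction of $\sigma$ straight under the convention $(\sigma f)(x)=f(\sigma^{-1}(x))$ when verifying that the unique preimage of $R^*$ under $b$ is $\sigma f$ rather than $\sigma^{-1}f$. The remaining ingredients --- extracting $\sigma$ from equal histograms, and the indicator performance measure --- are routine once this correspondence is fixed.
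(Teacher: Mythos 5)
Your proof is correct and follows essentially the same route as the paper's: extract non-block-uniformity via Theorem~\ref{th:nonuninfl}, take two same-histogram functions related by a permutation $\sigma$, and compare the in-order enumerator against the $\sigma$-order enumerator, which produce the same result vector with different probabilities (your direction bookkeeping for $(\sigma f)(x)=f(\sigma^{-1}(x))$ checks out). The only cosmetic difference is that you make explicit two steps the paper leaves implicit --- constructing $\sigma$ from the equal histograms, and converting the probability gap into a performance gap via the indicator measure $M_\cont(R)=\ind{R=R^*}$, where the paper instead cites Lemma~\ref{le:rvnfl}.
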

\begin{proof} Since NFL does not hold for $\P_\cont$ we have by
Theorem \ref{th:nonuninfl} that $\P_\cont$ is not block uniform.
Hence there are two functions $f$ and $\sigma\! f$ in the same base class
$B_h$ such that $\P_\cont(f)>\P_\cont(\sigma\! f)$,
where $\sigma$ is a permutation on $\sspace$.
Let $e$ and $e_\sigma$ be non-adaptive optimisers, with $e$ searching
$\sspace=\set{x_1,\dots,x_n}$ in order,
and $e_\sigma$ searching $X$ in the order
of $\sigma\! X=\set{\sigma(x_1),\dots,\sigma(x_n)}$.
Now $e$ generates the result vector
$R_f=\langle f(x_1),\dots,f(x_n)\rangle$ exactly when $f$ is the
true function, and $e_\sigma$ generates $R_f$ exactly when $\sigma\!f$
is the true function.
An immediate consequence is that
$\P_{\cont e}(R_f)=\P_\cont(f)>\P_\cont(\sigma\!f)=\P_{\cont e_\sigma}(R_f)$.
 That is, the non-adaptive algorithms $e$ and $e_\sigma$ generate $R_f$ with
different probability, which means that there is free lunch for some
non-adaptive optimiser under some performance measure by Lemma
\ref{le:rvnfl}.
\end{proof}

In conclusion, specific performance measures can be considered for both
practical and theoretical reasons. They are more practically relevant in
the sense that they measure aspects we care about in practice (such
as how long it takes to find a maximum). But they also have theoretical
interest, as they expose aspects that are invisible from an
arbitrary-measure perspective.

\section{Universal Free Lunch}\label{sec:ufl}

We now turn to the question of whether or not a free lunch is available
under \m, which we will answer in the affirmative for both
arbitrary performance measures and \mptm.

The universal distribution solves the induction problem for sequence
prediction \cite{Hutter2005,Rathmanner2011}.
Black-box optimisation also include an induction problem in the extrapolation
of target-function behaviour from the points already evaluated.
Although successful inference of the  target-function
behaviour may not be strictly necessary,
it will typically enable better choices of future search points.

There are several important differences between sequence prediction and
optimisation.
First, optimisation is an \emph{active} setting: the choices
of the optimiser affect both the learning outcome and the reward.
This entails an exploration/exploitation tradeoff in the choice between
potentially informative points and
points likely to mean high performance (e.g.\ points likely to be a maximum).
Further, optimisation is a finite setting,
which yields less time to exploit a good model (compared to
sequence prediction where infinite sequences are considered).
There are also major differences in the hypothesis classes and in how
performance is measured.

Section \ref{sec:quantfree} presents a number of results bounding
the amount of free lunch under \m. Perhaps surprisingly, only a small
amount of free lunch is available under the universal distribution.

\subsection{Free lunch under arbitrary performance measures}\label{sec:unifreearb}
Streeter \cite{Streeter2003} showed that there is free lunch for $\m$ under
certain technical conditions.
We prove a similar result, but with more interpretable conditions
(in terms of the size of \sspace, only).
We also use a different proof than Streeter.

\begin{theorem}[Universal free lunch]\label{th:freem}
There is free lunch for the problem \m for all problem contexts with
sufficiently large search space
(the required size depending on the reference machine only).
\end{theorem}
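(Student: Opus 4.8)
The plan is to reduce everything to Theorem~\ref{th:nonuninfl}. Since a free lunch for $\mf$ is by that theorem equivalent to $\mf$ failing to be block uniform, it suffices to produce, for every sufficiently large search space, two functions lying in a single base class $B_h$ but receiving different $\mf$-probabilities. Because $\mf(f)=c_\mf\cdot 2^{-K(f|\sspace,\range)}$ and the normaliser $c_\mf$ depends on $\sspace,\range$ but not on $f$, two functions in the same base class have equal $\mf$-probability precisely when they have equal complexity $K(f|\sspace,\range)$. The whole statement therefore reduces to exhibiting, inside one base class, a function of low complexity together with one of high complexity.

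I would work with binary-valued functions throughout, which is legitimate since $0,1\in\range$. Writing $n=\abs{\sspace}$, each function taking only the values $0$ and $1$ is encoded (Definition~\ref{def:strenc}) by the length-$n$ list of its outputs in the fixed order on $\sspace$, and its histogram is pinned down by the number $k$ of ones. I fix $k=\lfloor n/2\rfloor$ and take $B_h$ to be the base class of all functions using exactly $k$ ones and no values other than $0$; this class has exactly $\binom{n}{k}$ members, a number that does not depend on $\abs{\range}$.

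Inside $B_h$ I would compare two specific functions. For the simple one, let $f_0$ output $1$ on the first $k$ elements of $\sspace$ and $0$ on the remaining $n-k$; given $\sspace,\range$, a single fixed program reconstructs $f_0$ from the number $k$ alone, so $K(f_0|\sspace,\range)\le 2\log n+O(1)$. For the incompressible one, I would use the standard counting bound: there are fewer than $2^\ell$ programs of length below $\ell$, hence fewer than $2^\ell$ functions of complexity below $\ell$, so whenever $\binom{n}{k}\ge 2^\ell$ at least one member $f_1\in B_h$ must satisfy $K(f_1|\sspace,\range)\ge\ell$. Taking $\ell=\lfloor\log\binom{n}{\lfloor n/2\rfloor}\rfloor$ and invoking Stirling's estimate $\log\binom{n}{\lfloor n/2\rfloor}\ge n-O(\log n)$ yields an $f_1\in B_h$ with $K(f_1|\sspace,\range)\ge n-O(\log n)$. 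All implied constants come from the reference machine and the fixed encoding, and in particular are independent of $\range$.

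It then remains to compare the two bounds: for every $n$ past a threshold determined by the reference-machine constants alone, the upper bound $2\log n+O(1)$ on $K(f_0|\sspace,\range)$ is strictly below the lower bound $n-O(\log n)$ on $K(f_1|\sspace,\range)$, so $\mf(f_0)\neq\mf(f_1)$ even though $f_0,f_1\in B_h$. Hence $\mf$ is not block uniform and Theorem~\ref{th:nonuninfl} supplies the free lunch. The main obstacle I anticipate is bookkeeping rather than conceptual: one must run the counting bound for programs halting \emph{conditioned on} $\sspace,\range$, keep the logarithmic slack in $\log\binom{n}{\lfloor n/2\rfloor}$ under control so that the gap between the two complexities is genuinely positive, and verify that every constant absorbed into the $O(\cdot)$ notation depends on the reference machine only, so that the threshold on $\abs{\sspace}$ is uniform in $\range$ as the theorem claims.
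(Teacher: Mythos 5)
Your proposal is correct and follows essentially the same route as the paper's proof: both reduce the theorem to non-block-uniformity via Theorem~\ref{th:nonuninfl} and then exhibit, within a single base class, one constant-complexity (up to $O(\log n)$) function alongside an incompressible one obtained by counting short programs. The only immaterial difference is your choice of the balanced base class with $\lfloor n/2\rfloor$ ones instead of the paper's needle-in-a-haystack class with a single $1$; this gives a larger complexity gap than needed (the paper gets by with $\lb\abs{\sspace}-1$ versus a constant) but changes nothing essential.
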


\begin{proof}
It will be shown that $\m_\cont$ is not block uniform for problem contexts
with sufficiently large \sspace, which by Theorem \ref{th:nonuninfl} implies
that NFL does not hold.
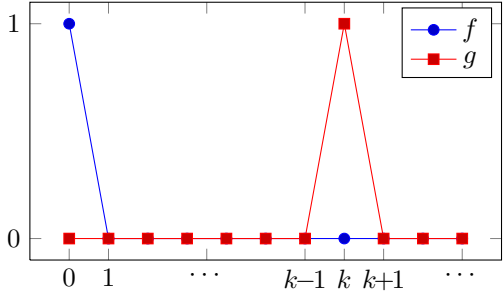
\begin{figure}
\centering
\begin{tikzpicture}
\begin{axis}[
  height=5cm,
  ytick=data,
  xtick={0,1,3.5,6,7,8,10},
  xticklabels={0,1,$\cdots$,$k\sminus 1$,$k$,$k\splus1$,$\cdots$}]
\addplot coordinates {
	(0,1)
	(1,0)
	(2,0)
	(3,0)
	(4,0)
	(5,0)
	(6,0)
	(7,0)
	(8,0)
	(9,0)
	(10,0)  };
\addplot coordinates {
	(0,0)
	(1,0)
	(2,0)
	(3,0)
	(4,0)
	(5,0)
	(6,0)
	(7,1)
	(8,0)
	(9,0)
	(10,0)  };
	\legend{$f$, $g$}
\end{axis}
\end{tikzpicture}
\caption[Illustration of non-block uniformity of \m]{The function $f$
has complexity bounded by a constant $c_f$ independent of \sspace and
\range. In contrast, the complexity of $g$ grows logarithmically with
$\abs\sspace$. See the proof of Theorem \ref{th:freem} for details.}
\label{fig:plotunilunch}
\end{figure}

Pick a problem context \probcon.
Consider two functions $f$ and $g$ in the base class $B_h\subseteq Y^X$
of functions
with one value 1 and the rest of the values 0. Let $f$ be 1 at $x_1$
and let $g$ be 1 at some point $x_k$ chosen so that
$\cK{g}{\probcon}\geq \lb |X|-1$  (see Fig.~\ref{fig:plotunilunch}).
To see that such a $g$  exists,  note that there are $|X|$
different functions in $B_h$.
 As the halting programs for the reference machine
constitute a prefix code, there can be at most  $|X|/2$ halting programs
of length $\leq \lb |X|-1$ by Kraft's inequality.
Thus at least one of the $B_h$-functions must have a shortest program
longer than $\lb |X|-1$, and therefore
complexity $\fK{g}\geq \lb |X|-1$.
Meanwhile, $\fK{f}\leq c_f$ for some constant $c_f$ independent
of the problem context.
So for search spaces with $\lb(\abs\sspace)-1>c_f$,
this means that $f$ will have lower complexity than $g$, and
thus that $\mf$ will assign different probabilities to $f$ and $g$.
As $f$ and $g$ are elements of the same base class, this shows
that $\m$ is not block uniform for search spaces greater than
$2^{c_f+1}$.
By Theorem \ref{th:nonuninfl}, this implies a free lunch for
\m under some performance measure.
\end{proof}

Indeed, $\m$ is not even close to block uniform for large search spaces in the
sense that the functions of type $f$ and $g$ will receive substantially
different weights.
However, this does not necessarily imply a big free lunch, as
we shall see in Section \ref{sec:quantfree}.

\subsection{Free lunch under $\mptm$}\label{sec-freemptm}
As has been discussed, in practice we often care about a particular performance
measure such as \mptm.

\begin{theorem}\label{th:freemptm}
There is free lunch for the problem $\m$ under the performance measure
$\mptm$ for all problem contexts with sufficiently large search space
(the required size depending on the reference machine only).
\end{theorem}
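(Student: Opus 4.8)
The plan is to produce two optimisers with different $\mptm$ performance under $\m$; by Definition~\ref{def:pmnfl} this is precisely a free lunch. I would use two \emph{non-adaptive} optimisers that differ only in the order in which they probe $\sspace$. Fix a canonical ordering $x_1,\dots,x_n$ of $\sspace$, write $M=\max\range$, and for each point $x$ let $E_x\subseteq\range^\sspace$ be the set of functions whose unique maximiser is $x$, that is $f^{-1}(M)=\{x\}$. The two optimisers $a$ and $b$ will probe $\sspace\setminus\{x_1,x_k\}$ in the same fixed order during their first $n-2$ steps, for a point $x_k$ chosen below; then $a$ probes $x_1$ and finally $x_k$, whereas $b$ probes $x_k$ and finally $x_1$.

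First I would compute $\mptmcon^\m(a)-\mptmcon^\m(b)$ by a case analysis over $f$. If $f$ attains $M$ somewhere among the first $n-2$ shared probes, or never attains $M$, or equals $M$ at both $x_1$ and $x_k$, then $a$ and $b$ locate $M$ (or fail to) at exactly the same step, contributing nothing to the difference. The only surviving contributions come from $E_{x_1}$ and $E_{x_k}$: on $f\in E_{x_1}$ the optimiser $a$ finds $M$ one step earlier than $b$, and on $f\in E_{x_k}$ one step later. Summing yields the clean identity $\mptmcon^\m(a)-\mptmcon^\m(b)=\mf(E_{x_k})-\mf(E_{x_1})$, where $\mf(E_x)=\sum_{f\in E_x}\mf(f)$, so it remains to choose $x_k$ with $\mf(E_{x_k})<\mf(E_{x_1})$.

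For the lower bound, the function that is $M$ at $x_1$ and $0$ everywhere else lies in $E_{x_1}$ and is computable from $\sspace,\range$ by a fixed program, so it has complexity at most a constant $c_0$ depending only on the reference machine; since $c_\mf\geq 1$ this gives $\mf(E_{x_1})\geq 2^{-c_0}$, exactly the simple-function estimate used in Theorem~\ref{th:freem}. For the upper bound I would use that the sets $E_{x_1},\dots,E_{x_n}$ are pairwise disjoint (no function has two distinct unique maximisers), so as $\mf$ is a probability distribution $\sum_{j=1}^{n}\mf(E_{x_j})\leq 1$; by pigeonhole some $x_k\neq x_1$ satisfies $\mf(E_{x_k})\leq 1/(n-1)$. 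Fixing this $x_k$ gives $\mf(E_{x_k})-\mf(E_{x_1})\leq \frac{1}{\abs\sspace-1}-2^{-c_0}$, which is strictly negative once $\abs\sspace>1+2^{c_0}$, a bound depending only on the reference machine. Hence $a$ and $b$ differ and there is a free lunch.

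The main obstacle is obtaining a \emph{provably nonzero} gap rather than mere non-block-uniformity: a careless comparison of two orderings drowns in cancellation, since a \quotes{needle} ($M$ at one low-complexity point) and a \quotes{co-needle} ($M$ everywhere except one low-complexity point) each carry $\Omega(1)$ mass with opposite sign. The idea that removes this is to let $a$ and $b$ diverge only in their \emph{last two} probes: then every function with two or more maximisers has already been solved before the divergence and cancels out, leaving only the single-maximiser masses $\mf(E_{x_j})$, whose asymmetry is controlled by the constant-against-$1/\abs\sspace$ estimate above. A minor point to verify is the convention for functions never attaining $M$, but these are probed identically by $a$ and $b$ and cancel regardless.
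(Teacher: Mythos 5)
Your proposal is correct, but it takes a genuinely different route from the paper's. The paper works with \emph{adaptive} optimisers: it fixes a set $D_k$ of incompressible points, conditions on the event $G$ of observing all zeros on $Q=\sspace-D_k-\{x_1\}$, and uses Lemma \ref{le:incfunc} (any function whose maxima avoid $x_1$ and compressible points has complexity at least $\lb|\sspace|-c$) together with the cardinality bound $|G_m|\leq|\range|^{k+1}$ to show that, given $G$, the probability of a maximum at an incompressible point $x_m$ decays like $O(1/|\sspace|)$ while a maximum at $x_1$ retains constant mass; two optimisers that diverge only in the order of $x_1,x_m$ after searching $Q$ then differ under \mptm. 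You instead let two \emph{non-adaptive} optimisers diverge only in their last two probes, which yields the exact cancellation identity $\mptmcon^\m(a)-\mptmcon^\m(b)=\mf(E_{x_k})-\mf(E_{x_1})$, and---this is the substantive difference---you replace the entire incompressibility apparatus by a purely measure-theoretic pigeonhole: the sets $E_{x_j}$ are pairwise disjoint, so some $x_k\neq x_1$ satisfies $\mf(E_{x_k})\leq 1/(|\sspace|-1)$, while the needle at $x_1$ gives $\mf(E_{x_1})\geq 2^{-c_0}$ exactly as in Theorem \ref{th:freem}. Your route is shorter and more elementary (Lemma \ref{le:incfunc} is never invoked), it produces an explicit quantitative gap of at least $2^{-c_0}-1/(|\sspace|-1)$, it shows the free lunch under \mptm is available even to non-adaptive optimisers (which the paper's adaptive construction does not establish, and which complements the discussion of adaptivity in Section \ref{sec:partmeas}), and it visibly generalises to any problem distribution placing constant mass on some unique-maximiser function. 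What the paper's argument buys in exchange is interpretability: its conditional computation exhibits \emph{why} adaptivity helps---after observing zeros on $Q$, Occam-style posterior reasoning favours the simple point $x_1$ over an incompressible one---which foreshadows the upper-bound results that follow. Two small points in your write-up deserve an explicit sentence but are harmless: the pigeonholed $x_k$ depends on \mf and is thus incomputable, which is fine because optimisers in this paper are arbitrary functions, not required to be computable; and the convention for $\min_i$ over an empty set (functions never attaining $\max\range$) must assign the same value to every $\max\range$-free result vector, which any sensible convention does, so those terms cancel as you claim.
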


The proof is similar to Theorem \ref{th:freem}, but more work is required
to ensure a complexity difference between two potential maximums, rather
than between two specific functions. A full proof is included in the
Appendix.

\section{Upper Bounds}\label{sec:upbound}
\label{sec:quantfree}

Theorems \ref{th:freem} and \ref{th:freemptm} show that there is free lunch
under the universal distribution. This section will bound the amount of
free lunch available, and show that it is only possible to outperform
random search by a constant factor.
First we show that the performance of computable optimisers deteriorates
linearly with the worst-case scenario and
the size of the search space.
This result applies to decidable performance measures in general, and has a
concrete interpretation for $\mptm$, where it implies that as the size of the
domain is increased, a non-zero fraction of the domain must be probed before a
 maximum is found in expectation.
This does not contradict the free lunches above, as the required fraction
may differ between optimisers.

We also consider possible ways to circumvent the negative result described
above by means of incomputable search procedures.
A further negative result for $\mptm$ is obtained: It does not appear possible
to find the maximum with only $o(|\sspace|)$ target function evaluations.
That is, the expected number of probes required to find the maximum grows
linearly with the size of the search space, but again, the proportion may
differ substantially between optimisers.

\subsection{Computable optimisers}\label{sec:compalg}
To bound the amount of free lunch available for computable optimisers,
we will adapt a proof-technique for showing that average-case complexity
is equal to the worst-case complexity under the universal distribution
\cite[Section~4.4]{Li2008}.
Although no formal theorem relies on it, we will think of greater
$M$-values as \emph{worse} performance.

\begin{lemma}
A function \mbox{$f_{\rm bad}\colon\,\sspace\to\range$} is
\emph{maximally bad} for an optimiser $a$ on the problem context
\probcon with respect to a performance measure $M$ if
$M_\cont(T^y(a,f_{\rm bad}))=\max_{R\in\rset(\probcon)}M_\cont(R)$.
There always exists a maximally bad function
$f_{\rm bad}\colon\,\sspace\to\range$ for $a$ with respect to $M$,
regardless of the performance measure $M$, the optimiser $a$ and the
problem context \probcon.
\end{lemma}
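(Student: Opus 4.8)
The plan is to reduce the existence of a maximally bad function to two facts: that the set of attainable result vectors is finite (so a worst one exists), and that the fixed optimiser $a$ can realise whichever result vector we like (so it can realise the worst). Fix $a$, the context \probcon, and the performance measure $M$. First I would note that every result vector lies in $\range^{\abs\sspace}$, and since \sspace and \range are finite, the set $\rset(\probcon)$ is finite; as $M_\cont$ takes values in $[0,\infty)$, the quantity $\max_{R\in\rset(\probcon)}M_\cont(R)$ is well defined and is attained at some $R^\ast\in\rset(\probcon)$.

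The crux is then to produce a target function whose trace under $a$ equals this $R^\ast$. Here I would invoke Lemma~\ref{le:numfun}, which states that the set $\set{T^y(a,f):f\in\range^\sspace}$ of result vectors producible by $a$ does not depend on $a$, and hence equals the full set $\rset(\probcon)$. Consequently $R^\ast$ is producible by $a$, so there is an $\fbad\in\range^\sspace$ with $T^y(a,\fbad)=R^\ast$. Combining this with the choice of $R^\ast$ gives $M_\cont(T^y(a,\fbad))=\max_{R\in\rset(\probcon)}M_\cont(R)$, which is precisely the condition that $\fbad$ be maximally bad for $a$ with respect to $M$.

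The one point that needs care is this appeal to Lemma~\ref{le:numfun}: the definition of maximally bad maximises over \emph{all} of $\rset(\probcon)$, i.e.\ over result vectors producible by \emph{some} optimiser, whereas the conclusion demands that the \emph{given} optimiser $a$ attain that maximum. Lemma~\ref{le:numfun} closes exactly this gap. If one wished to sidestep the citation, the same fact can be shown directly by reverse-engineering $a$: simulate $a$ step by step and, each time it queries a new point, define $\fbad$ there to be the corresponding entry of $R^\ast$; because $a$ never repeats a query and its full trace has length $\abs\sspace$, this determines $\fbad$ on all of \sspace and forces $T^y(a,\fbad)=R^\ast$. Everything else is routine.
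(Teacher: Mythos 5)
Your proof is correct and takes essentially the same route as the paper's: the paper likewise picks a maximally bad result vector $R_{\rm bad}\in\rset(\probcon)$ and uses Lemma~\ref{le:numfun} to transfer its attainability to the given optimiser $a$ (phrased there as reducing to \emph{some} optimiser, namely the in-order enumerator $e$, for which the witnessing function $f(x_i)=R_{\rm bad}[i]$ is written down explicitly). Your reverse-engineering aside is also valid — since $a$ never revisits points, any $R^\ast\in\range^{\abs\sspace}$ can be realised by defining $\fbad$ adaptively along the simulation — and it is essentially the standard argument underlying Lemma~\ref{le:numfun} itself.
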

\begin{proof}
By Lemma \ref{le:numfun}, all optimisers produce the same result
vectors, so it suffices to show that \emph{some}
optimiser has a maximally bad function.
The non-adaptive optimiser $e$ that searches $X$ in order has a maximally bad
function.
To see this, let $R_{\rm bad}$ be a maximally bad result vector on \probcon,
and let $f$ be the function satisfying $f(x_i)=R_{\rm bad}[i]$ for all
$x_i\in X$.
Then $e$ produces $R_{\rm bad}$ on $f$.
\end{proof}

A performance measure $M$ for which there is an algorithm
deciding whether $M_\cont(R_1)<M_\cont(R_2)$ for every \probcon
and every pair of result vectors $R_1,R_2\in\rset(\probcon)$ is
\emph{decidable}.
For any decidable performance measure $M$, it is possible to
create a procedure $\procedure{FindWorst}(a,\sspace, \range)$ that given a computable
optimiser $a$ (specified by some binary string) and a context $X,Y$,
returns a maximally bad function $\fbad\colon\,X\to Y$ for $a$.
$\procedure{FindWorst}$ is a computable operation since $a$ is computable and
$M$ is decidable: \procedure{FindWorst} need only simulate $a$ on all possible
functions in $\range^\sspace$, and output one that yields a worst result vector.
This shows that for all decidable performance measures, all computable
optimisers $a$ and all contexts $X,Y$, there is a maximally bad
function \mbox{$\fbad\colon\,X\to Y$}
for $a$ with complexity
\begin{equation}
\cK{\fbad}{\sspace,\range}\leq \length(\procedure{FindWorst})+\length(a)+c
\enspace,\label{eq:fbad}
\end{equation}
where the $c$ term depends only on the reference machine, and absorbs
the cost for initialising \procedure{FindWorst} with $a$,
$\sspace$ and $\range$.
Pivotally, the bound is independent of $X,Y$. This is the central observation
behind the following theorem, which shows that expected performance always
deteriorates linearly with the worst-case scenario.
The theorem's prime relevance is for performance measures whose
worst-case value grows with $X$.

\begin{theorem}[Almost NFL for $\m$]\label{th:badalg}
For every decidable performance measure $M$ and every computable optimiser $a$
there exists a constant $c_a>0$ such that for all $X,Y$
\[M_{\cont}^\m(a)\geq c_a\max_{R\in\rset(\probcon)}M_{\sspace\range}(R)\enspace.\]
\end{theorem}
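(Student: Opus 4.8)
The plan is to exploit the single key fact established immediately above the theorem, namely that for a fixed decidable $M$ and a fixed computable optimiser $a$ there is a maximally bad function $\fbad$ whose complexity obeys the bound $\eqref{eq:fbad}$ \emph{uniformly} in the context. First I would lower-bound the expectation by discarding all but one term. Writing out $M_\cont^\m(a)=\sum_{f\in\range^\sspace}\mf(f)\,M_\cont(T^y(a,f))$ and recalling that $M$ takes values in $[0,\infty)$, every summand is nonnegative, so retaining only the summand indexed by $\fbad$ gives $M_\cont^\m(a)\geq\mf(\fbad)\,M_\cont(T^y(a,\fbad))$. By the definition of a maximally bad function, $M_\cont(T^y(a,\fbad))=\max_{R\in\rset(\probcon)}M_\cont(R)$, so everything reduces to bounding $\mf(\fbad)$ below by a context-independent constant.

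Next I would unfold the definition of \m. Since $\mf(f)=c_{\mf}\cdot2^{-\fK{f}}$ with $c_{\mf}\geq1$, we have $\mf(\fbad)\geq2^{-\fK{\fbad}}$, and inserting $\eqref{eq:fbad}$ yields $\mf(\fbad)\geq2^{-(\length(\procedure{FindWorst})+\length(a)+c)}$. Setting $c_a\defined2^{-(\length(\procedure{FindWorst})+\length(a)+c)}$ then closes the chain of inequalities to give $M_\cont^\m(a)\geq c_a\max_{R\in\rset(\probcon)}M_\cont(R)$, as required. The strict positivity $c_a>0$ is immediate because the exponent is a finite quantity.

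The crux of the argument — indeed the only thing that keeps the statement from being vacuous — is that $c_a$ must be independent of $X,Y$, and this is exactly what the uniform bound $\eqref{eq:fbad}$ supplies. Because $a$ is computable and $M$ decidable, the \emph{single} program $\procedure{FindWorst}$ produces a maximally bad function for \emph{every} context once it is handed $X,Y$, so $\length(\procedure{FindWorst})$ is fixed by $M$ and the reference machine alone, while $\length(a)$ is fixed by the optimiser alone; neither grows with the context. I therefore expect no genuine obstacle beyond verifying this uniformity carefully, in particular that the additive constant $c$ absorbing the encoding of $a$, $\sspace$ and $\range$ into $\procedure{FindWorst}$ is itself context-free, which is precisely the ``pivotal'' observation noted before the theorem.
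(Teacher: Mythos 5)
Your proposal is correct and follows essentially the same route as the paper's own proof: retain only the $\fbad$ summand in the $\m$-expectation, use $c_{\mf}\geq 1$ together with the context-independent complexity bound \eqref{eq:fbad} to obtain $\mf(\fbad)\geq c_a$, and conclude via the maximal badness of $T^y(a,\fbad)$. Your explicit choice $c_a=2^{-(\length(\procedure{FindWorst})+\length(a)+c)}$ and your emphasis on the uniformity of \eqref{eq:fbad} in $X,Y$ are exactly the pivotal points the paper relies on.
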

\begin{proof}
Let \probcon be a problem context and
$\fbad$ be the output of $\procedure{FindWorst}(a,\sspace,\range)$, then
\begin{align*}
M_\cont^\m(a)=&\sum_{f\in Y^X}\mf(f)M_\cont(T^y(\range,a,f))\\
&\geq c_\mf 2^{-K(\fbad|\sspace,\range)}M_{\sspace\range}(T^y(a,\fbad))
\enspace,
\end{align*}
where we have first used the definition \eqref{eq:perfdef} of performance
measures,
and then that the sum of non-negatives is greater than all of its terms.
But $c_\mf 2^{-K(\fbad|\sspace,\range)}\geq c_a$ for some $c_a>0$ independent of
$X,Y$ due to $c_{\m_{XY}}\geq 1$ and the complexity bound \eqref{eq:fbad}.
And $T^y(a,\fbad)$ was a worst result vector by the construction of
\procedure{FindWorst}.
Combined, this gives the bound
$M_\cont^\m(a)\geq c_a\cdot\max_{R\in\rset(\probcon)}M_{\sspace\range}(R)$.
\end{proof}

This theorem shows that for every performance measure $M$, there is only a
 constant amount of free lunch available in an asymptotic sense.
It has no impact on performance measures whose worst-case value does not
grow unboundedly with either $\sspace$ or $\range$.
However, the \quotes{semi-assumption} of higher values being
worse is not necessary: If the converse is the case and high values are better,
then the proposition shows that all optimisers will do well. Indeed, this is
also an NFL result, as it implies that random search (and even optimisers
designed to do poorly!)\ will perform well.

Applied to the performance measure $\mptm$, Theorem \ref{th:badalg} has a
fairly concrete interpretation: For any computable optimiser
$a$, the expected number of evaluations to find the maximum grows linearly with
$|\sspace|$.
Corollary \ref{co:badalgmptm} follows immediately from Theorem
\ref{th:badalg} and the observation that
for any context $X,Y$, the worst-case scenario is to find a maximum only
at the very last probe; that is, $\max_{R\in\rset(X,Y)}\mptmcon(a)=|X|$.

\begin{corollary}\label{co:badalgmptm}
For every computable optimiser $a$ there exists a constant $c_a>0$ such
that $\mptmcon^\m(a)\geq c_a\cdot\abs\sspace$ for all optimisers $a$ and
all problem contexts \probcon.
\end{corollary}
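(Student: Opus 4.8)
The plan is to obtain the corollary as a direct specialisation of Theorem \ref{th:badalg} to the performance measure $\mptm$, so that the only genuine work is to identify the worst-case value $\max_{R\in\rset(\probcon)}\mptmcon(R)$ and to confirm that it equals $\abs\sspace$.

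First I would check that $\mptm$ is decidable in the sense required by Theorem \ref{th:badalg}. Given any two result vectors $R_1,R_2\in\rset(\probcon)$, one computes $\mptmcon(R_1)$ and $\mptmcon(R_2)$ by scanning each vector for the first coordinate equal to $\max Y$ and then comparing the two indices; this is plainly a computable operation. Hence Theorem \ref{th:badalg} applies with $M=\mptm$, and for every computable optimiser $a$ it furnishes a constant $c_a>0$, independent of the context, such that
\[
\mptmcon^\m(a)\ \geq\ c_a\cdot\max_{R\in\rset(\probcon)}\mptmcon(R)\enspace.
\]

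It then remains to evaluate the worst-case term. Since $\mptmcon(R)$ is by definition the index of the first coordinate of $R$ attaining $\max Y$, and $R$ has length $\abs\sspace$, we always have $\mptmcon(R)\leq\abs\sspace$, so the maximum is at most $\abs\sspace$. To see that this bound is tight, I would exhibit a ``needle in a haystack'' function $f$ that takes the value $\max Y$ at the single point $x_{\abs\sspace}$ and the value $0$ elsewhere (both values lie in $\range$, and $\max Y\neq 0$ since $1\in\range$): the non-adaptive optimiser that searches $\sspace$ in order then produces a result vector whose only occurrence of $\max Y$ is in the final coordinate, giving $\mptmcon(R)=\abs\sspace$. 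By Lemma \ref{le:numfun} this result vector lies in $\rset(\probcon)$ for every optimiser, so $\max_{R\in\rset(\probcon)}\mptmcon(R)=\abs\sspace$. Substituting into the displayed inequality yields $\mptmcon^\m(a)\geq c_a\cdot\abs\sspace$, as claimed.

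I do not expect a real obstacle here, since the content is carried entirely by Theorem \ref{th:badalg} and the corollary amounts to substituting the correct worst-case value. The only step deserving a moment's care is the \emph{achievability} of the extremal result vector --- that a maximum found only at the very last probe is genuinely realisable --- which is exactly what Lemma \ref{le:numfun} secures, since it suffices to exhibit the needle function for the single optimiser that searches in order.
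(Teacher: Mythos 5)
Your proof is correct and follows exactly the paper's route: the paper obtains the corollary immediately from Theorem \ref{th:badalg} together with the observation that $\max_{R\in\rset(\probcon)}\mptmcon(R)=\abs\sspace$, i.e.\ the worst case is finding a maximum only at the very last probe. Your extra checks---that $\mptm$ is decidable and that the extremal result vector is actually achieved (via a needle function with the maximum at the final point searched)---simply spell out details the paper leaves implicit.
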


The implications of this result should not be overstated. The constant $c_a$
 may be very small; for example, if the description of the optimiser $a$
is 100 bits long, then $c_a$ becomes of the order $2^{-100}$. The fact that
the expected number of probes is forced to grow linearly with such a constant
is mainly of theoretical importance.
Nonetheless, the result does illustrate a fundamental hardness of optimisation,
 and shows that the universal distribution does not provide enough bias for
efficient (sublinear) maximum finding.

\subsection{Needle-in-a-haystack functions}\label{sec:niah}
A problematic class of functions is the class of so-called
needle-in-a-haystack (NIAH) functions.
We will use them to generalise Corollary \ref{co:badalgmptm} to
incomputable optimisers.
A \emph{NIAH-function} is a target function
that is $0$ in all points except one where it equals~1.
The exception point is called the \emph{needle}.
It should be intuitively clear that it is hard to find the maximum of a
NIAH-function. Probing a NIAH-function, the output will generally just
turn out to be 0 and provide no clues to where the needle might be.

Formally, for any $X,Y$
let $\niah_{\sspace\range}$ be the class of NIAH functions on $X,Y$ and
let $\uniah$ be the \emph{uniform  NIAH problem}
defined as $\niahp(f)\defined 1/|\niahc|$ if $f\in \niahc$ and 0 otherwise.
The function class $\niah_{\sspace\range}$ is c.u.p. for any $X,Y$, so NFL
holds for $\niahp$ by Theorem \ref{th:cupnfl}.
The expected performance (of any optimiser) on the uniform
NIAH-problem can be calculated
from a general result of Igel and Toussaint \cite{Igel2003a}. They show
that for any c.u.p.\ problem $u_F$ where $F$ only contains functions with
exactly $m$ maxima, the expected number of probes to find a maximum is
\mbox{$(\abs{\sspace}+1)/(m+1)$}. The NIAH-functions have exactly one maximum,
which gives the following lemma.
\begin{lemma}\label{le:niahexp}
Under \niahp, the expected optimisation time
is $\mptmcon^{\uniah}(a)=(\abs{\sspace}+1)/2$
for any optimiser $a$.
\end{lemma}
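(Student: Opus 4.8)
The plan is to compute the expectation directly rather than to quote the general c.u.p.\ formula, since for needle-in-a-haystack functions the structure makes a self-contained count very clean. Fix an optimiser $a$ and a context \probcon, and write $n\defined\abs\sspace$. The central observation is that every NIAH function returns $0$ on every probe except the single one that hits the needle. Hence, as long as $a$ has not yet sampled the needle, its search trace consists entirely of pairs of the form $(x,0)$, and this all-zero trace is \emph{identical} across all NIAH functions whose needle still lies outside the points probed so far. Since $a$ is a deterministic function of the trace and the context, an easy induction on the step index shows that $a$ probes a fixed sequence of points $p_1,p_2,\dots,p_n$, determined by running $a$ against the hypothetical all-zero feedback and independent of which NIAH function is actually the target. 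The constraint $a(\sspace,\range,T)\in\sspace-T^x$ guarantees these points are distinct, so $(p_1,\dots,p_n)$ is a permutation of \sspace.

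Given this frozen probing order, I would argue that the NIAH function whose needle sits at $p_i$ is optimised in exactly $i$ probes: the first $i-1$ probes return $0$, matching the hypothetical feedback, so $a$ indeed reaches $p_i$, where it reveals the needle. Because the needle is the unique global maximum of any NIAH function, finding the maximum coincides with finding the needle, and $\mptmcon$ therefore evaluates to $i$ on this target. There are exactly $n=\abs{\niahc}$ NIAH functions, one per needle location, each assigned probability $1/n$ by \niahp. Averaging then gives
\begin{equation*}
\mptmcon^{\uniah}(a)=\frac1n\sum_{i=1}^{n}i=\frac{n+1}{2}=\frac{\abs\sspace+1}{2}.
\end{equation*}

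The one step demanding care is the indistinguishability claim, namely that the probing order is genuinely frozen across all NIAH targets. This relies squarely on the noise-free, sample-without-replacement setup: because every not-yet-found needle produces the same all-zero prefix, $a$ cannot branch on the identity of the target before the needle appears, and so the choice $p_k$ at step $k$ is common to every NIAH function consistent with the first $k-1$ observations. Once this is established the result is immediate and, notably, the value $(\abs\sspace+1)/2$ does not depend on $a$, in agreement with the NFL conclusion of Theorem \ref{th:cupnfl} applied to the c.u.p.\ class \niahc. As a sanity check, one could alternatively bypass the direct count and invoke the general c.u.p.\ expectation $(\abs\sspace+1)/(m+1)$ of Igel and Toussaint \cite{Igel2003a} with $m=1$ maximum per function, obtaining the same answer.
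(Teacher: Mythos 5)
Your proof is correct, but it takes a genuinely different route from the paper's. The paper disposes of this lemma in one line by citing the general result of Igel and Toussaint \cite{Igel2003a} that for any c.u.p.\ problem $u_F$ whose functions all have exactly $m$ maxima, the expected optimisation time is $(\abs\sspace+1)/(m+1)$, and then setting $m=1$. You instead give a self-contained counting argument: since optimisers are by the paper's definition deterministic functions of the context and the trace, and every NIAH target feeds back the all-zero prefix until its needle is hit, the probe sequence $p_1,\dots,p_n$ is frozen across all NIAH targets and, by the no-resampling constraint $a(\sspace,\range,T)\in\sspace-T^x$, is a permutation of \sspace; the target with needle at $p_i$ is then optimised in exactly $i$ steps, and the uniform average over the $n=\abs{\niahc}$ targets is $(n+1)/2$. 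Your route is more elementary and in fact more informative: it shows that under \niahp the optimisation time of \emph{every} optimiser is uniformly distributed on $\{1,\dots,\abs\sspace\}$, not merely that its mean is $(\abs\sspace+1)/2$, which makes the NFL behaviour on this class completely transparent; the citation route buys generality (arbitrary c.u.p.\ classes with $m$ maxima) without redoing the combinatorics, and your closing remark already notes this equivalence. Two minor observations: your indistinguishability step genuinely relies on determinism of optimisers, which the paper's definition guarantees (a randomised optimiser would be handled by conditioning on its internal randomness and averaging, with the same conclusion); and you read $\mptm$ as the time to hit the function's own maximum (the needle), whereas the paper's formal definition uses $\max\range$ --- a mismatch whenever $\max\range>1$ that the paper itself glosses over, so your reading is the intended one.
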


One feature that makes the NIAH-class more problematic than other c.u.p.\
function classes is that the NIAH-functions all have fairly low complexity
(as remarked by \cite{Schumacher2001,Borenstein2006}). The NIAH-functions
have low complexity, since to encode a NIAH-function one only needs to
encode that it is NIAH (which takes a constant number of bits) and the position
of the needle (which requires at most $\lb\abs{\sspace}$ bits).
A NIAH-function thus has complexity of order $\Ordo(\lb\abs{\sspace})$;
in comparison,
a random function has exponentially greater complexity (above
$\abs{\sspace}\lb\abs{\range}$).

The NIAH-measure is computable. This is intuitively obvious, and easily
verified against the formal definitions of computable functions.
Definitions of real-valued computable functions can be found in \cite{Li2008}.
It is well-known that \m \emph{dominates} any computable measure
in the following sense.
\begin{lemma}[\m dominates $\uniah$]\label{le:mdomu}
There is a constant $c_{\rm NIAH}>0$ such that for all $X,Y$ and all
functions $f\colon\,\sspace\to\range$, it holds that
$\mf(f)\geq c_{\rm NIAH}\cdot \niahp(f)$.
\end{lemma}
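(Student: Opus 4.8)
The plan is to reduce the dominance inequality to an upper bound on the prefix complexity of NIAH-functions, exploiting that $\mf$ is, up to the factor $c_\mf\ge 1$, simply $2^{-\cK{f}{\sspace,\range}}$. For functions $f$ that are not NIAH the inequality is immediate, since then $\niahp(f)=0\le\mf(f)$. The only real content is therefore the case $f\in\niahc$, where $\niahp(f)=1/\abs{\niahc}=1/\abs\sspace$, and I must produce a constant $c_{\rm NIAH}>0$ that is \emph{independent of the context} $\probcon$ and satisfies $\mf(f)\ge c_{\rm NIAH}/\abs\sspace$.

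First I would exhibit a short self-delimiting program that outputs an arbitrary NIAH-function when given $\sspace,\range$ as conditioning input. Each $f\in\niahc$ is determined solely by the position of its needle, i.e.\ by an index $i\in\set{0,\dots,\abs\sspace-1}$ pointing into the ordered list of elements of $\sspace$. A decoder that reads $\sspace$ from the conditioning can compute $\abs\sspace$, and can therefore recover $i$ from exactly $\lceil\lb\abs\sspace\rceil$ bits read off the program tape, with no further delimiter required. A fixed routine then writes the function that is $1$ at the $i$th element of $\sspace$ and $0$ elsewhere. This routine has constant length, so
\[
\cK{f}{\sspace,\range}\;\le\;\lb\abs\sspace+c'
\]
for a constant $c'$ that depends on the reference machine only and, crucially, not on $\probcon$.

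Combining this bound with the definition \eqref{eq:m1} and $c_\mf\ge 1$ gives, for every $f\in\niahc$,
\[
\mf(f)=c_\mf\,2^{-\cK{f}{\sspace,\range}}\ge 2^{-\lb\abs\sspace-c'}=2^{-c'}\cdot\frac{1}{\abs\sspace}=2^{-c'}\niahp(f),
\]
while for $f\notin\niahc$ the inequality is trivial, so the claim holds with $c_{\rm NIAH}=2^{-c'}$.

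The step I expect to be the main obstacle is the uniformity of the constant across all contexts. The naive route---invoking the general fact that $\m$ dominates any fixed computable measure---yields a constant that could in principle depend on $\probcon$, whereas the lemma demands a single constant valid for all $\sspace,\range$ simultaneously. The coding argument above is precisely what secures this: because the needle-index is encoded in exactly $\lceil\lb\abs\sspace\rceil$ bits and the surrounding decoder has context-independent length, the overhead $c'$ is bounded uniformly. This is the uniform-in-$\probcon$ manifestation of the dominance of $\m$ over the (uniformly) computable measure $\uniah$.
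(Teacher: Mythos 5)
Your proposal is correct, but it takes a genuinely different route from the paper. The paper does not prove the lemma by a coding argument at all: it notes that $\uniah$ is a computable measure and then simply invokes the well-known dominance of $\m$ over computable measures, delegating the proof to the standard literature. You instead prove the statement from scratch, reducing dominance to the complexity bound $\cK{f}{\sspace,\range}\le\lb\abs\sspace+c'$ for NIAH functions via an explicit fixed-length encoding of the needle index (valid because the decoder recovers $\abs\sspace$ from the conditioning, so no delimiter is needed), and then plugging this into the definition \eqref{eq:m1} together with $c_\mf\ge 1$ and $\abs{\niahc}=\abs\sspace$. Your route is more elementary and self-contained, and it makes the uniformity of the constant across contexts fully transparent. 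One remark on your stated ``main obstacle'': the general dominance theorem, applied in its conditional form, does already yield a context-independent constant here --- roughly $2^{-K(\uniah)}$ --- because the whole family $\set{\niahp}$ is computed by a single program given $(X,Y)$ as conditioning input; so the paper's one-line appeal is legitimate, not merely naive. What the paper's route buys is generality (the same argument dominates any uniformly computable family of measures, not just NIAH); what yours buys is explicitness, and it is in fact the rigorous version of the informal $\Ordo(\lb\abs\sspace)$ complexity estimate for NIAH functions that the paper sketches in Section~\ref{sec:niah} immediately before the lemma.
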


\subsection{Incomputable optimisers}\label{sec:upboundinc}
Theorem \ref{th:badalg} and Corollary \ref{co:badalgmptm} were proven
for computable optimisers.
We now show that even incomputable optimisers suffer a linearly growing
loss in $|X|$ when the performance measure is $\mptm$.
 Incomputable search
procedures may seem like remote objects of concern, but for example the
(Bayes-)optimal procedure for $\m$ is incomputable due to the
incomputability of $\m$. Therefore, incomputable procedures do at least
have theoretical interest.

The following theorem generalises Corollary \ref{co:badalgmptm} to incomputable
search procedures, showing that they also must search a linearly growing
portion of $\sspace$ to find the maximum.
The theorem does not generalise to arbitrary performance measures
however, so the analogous generalisation of Theorem \ref{th:badalg} may
not be true.

\begin{theorem}[Almost NFL for $\m$ and $\mptm$]\label{th:unianfl}
Under \m, the expected optimisation time grows linearly with $\abs\sspace$,
regardless of the optimisation strategy.
\end{theorem}

\begin{proof}
The dominance of \m over $\uniah$ is used in \eqref{eq:dom},
between an expansion \eqref{eq:exp} and a contraction \eqref{eq:cont}
according to the definition \eqref{eq:perfdef} of performance measures:
\begin{align}
\mptmcon^\m(a)
=&\sum_{f\in\range^\sspace}\!\mf(f)\mptmcon(T^y(a,f))\label{eq:exp}\\
\geq c_{{\rm NIAH}} &\sum_{f\in Y^X}\!\niahp(f)\mptmcon(T^y(a,f))\label{eq:dom}\\
= c_{{\rm NIAH}}& \cdot\mptmcon^{\uniah}(a)\label{eq:cont}
\end{align}
Lemma \ref{le:niahexp} established \eqref{eq:cont} to be
$c_{{\rm NIAH}}\cdot (\abs{\sspace}+1)/2$ for all optimisers $a$.
Thus the expected \mptm-performance is always bounded below by
$c_{{\rm NIAH}}\cdot (\abs{\sspace}+1)/2$, which grows linearly with $|X|$.
\end{proof}

Since optimisation time can never grow faster than linearly with $|X|$,
the bound is asymptotically tight.
In this sense, Theorem \ref{th:unianfl} may be viewed as an asymptotic
almost-NFL theorem for the universal distribution and $\mptm$.
The constant $c_\niah$ in the proof may be very small however,
so Theorem \ref{th:unianfl} does not rule out that expected optimisation time
differ substantially between optimisers.

\section{Conclusion}

In this paper we investigated the No Free Lunch theorems when the performance
of an algorithm is measured in expectation with respect to Solomonoff's
universal distribution. We showed in Theorem \ref{th:freem} that there
is a free lunch with respect to this distribution.

Somewhat surprisingly, despite the bias away from randomness exhibited by
the universal distribution, the
size of the free lunch turns out to be quite small, at least asymptotically
(Theorems \ref{th:badalg} and \ref{th:unianfl}).
The reason for this is that there are many functions that are both simple
and hard to optimise. Most notably the needle-in-a-haystack
functions, which have complexity of at most $O(\log |X|)$, but for
which a maximum cannot be found without $O(|X|)$ probes.

It should be emphasised that there is little need to be too gloomy about
the negative results. The upper bounds on the size of the free lunch
given in both negative theorems depend on constants that in practise
are likely to be very small. Optimisation is a hard problem, so we should
not be too surprised if there are some reasonably frequently occurring
functions that cannot be efficiently optimised.

The fact that simplicity is not a sufficient characterisation of the difficulty of optimising a function is unfortunate. This is not true
in other domains such as supervised learning and sequence prediction where approaches based on Solomonoff's universal prior are theoretically
optimal in a certain sense \cite{Hutter2005}. One difficulty of optimisation lies in the exploration/exploitation problem, which occurs because at each
time-step an optimisation algorithm must make a choice between trying to learn the true function and probing the point that it believes to be the maximum.

Since Kolmogorov complexity is (by itself) insufficient for characterising the difficulty of optimising a function, a new criterion
is required. We are currently unsure what this should look like and consider this interesting future research.

\bibliographystyle{alpha}
\bibliography{library}

\appendix
\addcontentsline{toc}{section}{References}
\section*{Appendix}
\addcontentsline{toc}{section}{Appendix}
We here include a proof of Theorem \ref{th:freemptm}. The proof builds on
the following definitions and lemmas.

\begin{definition}
A point $x\in X$ is \emph{incompressible} with respect to the context
$X,Y$ if $K(x|X,Y)\geq \log(|X|)$.
\end{definition}

At least half of the points of any search space will be incompressible.
Functions that only have incompressible maxima (except, possibly, for a maximum
at $x_1$) will play an important role since they are guaranteed to have high
complexity. The reason for excluding $x_1$ will be apparent in the proof of
Theorem \ref{th:freemptm}.

\begin{lemma}\label{le:incfunc}
Let $X,Y$ be a problem context, and let $D\subseteq X$ be a
non-empty set of incompressible points. Let $g\colon\,X\to Y$ have at least one
maximum in $D$, and no maximum outside $D\cup\{x_1\}$.
Then $\cK{g}{\sspace,\range}\geq \lb(|\sspace|)-c$, where $c$ depends only on the reference machine and not on $g$, $\sspace$ or \range.
\end{lemma}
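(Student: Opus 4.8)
The plan is to run a data-processing argument: from any $g$ satisfying the hypotheses I will extract an incompressible point of $\sspace$ using only a fixed, constant-size procedure, so that $g$ must itself be almost as complex as the point I extract. Since any incompressible $x$ satisfies $\cK{x}{\sspace,\range}\geq\lb\abs\sspace$, producing a short program for such a point out of $g$ immediately yields a lower bound on $\cK{g}{\sspace,\range}$. Concretely, I would fix a computable procedure $P$ that, given $g$ together with $\sspace,\range$, first computes the set $S=\set{x\in\sspace : g(x)=\max_{x'\in\sspace}g(x')}$ of points at which $g$ attains its maximum value, and then outputs the first element (in the fixed order on $\sspace$) of $S\setminus\set{x_1}$ if this set is non-empty, and outputs $x_1$ otherwise. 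Note that $P$ is a single fixed algorithm; it neither knows $D$ nor verifies any hypothesis, it simply runs.

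Next I would check that, under the hypotheses, the output $\hat x$ of $P$ always lies in $D$ and is therefore incompressible. By assumption every maximum of $g$ lies in $D\cup\set{x_1}$ while at least one lies in $D$, so $S\subseteq D\cup\set{x_1}$ and $S\cap D\neq\emptyset$. If $S\setminus\set{x_1}\neq\emptyset$, then $\hat x\in S\setminus\set{x_1}\subseteq D$. Otherwise $S\subseteq\set{x_1}$, and since $S$ meets $D$ we must have $x_1\in D$ and hence $\hat x=x_1\in D$. In either case $\hat x\in D$, giving $\cK{\hat x}{\sspace,\range}\geq\lb\abs\sspace$.

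Finally, since $\hat x$ is obtained from $g$ by the fixed procedure $P$ with $\sspace,\range$ available in the conditional, composing $P$ with a shortest program for $g$ given $\sspace,\range$ produces a program for $\hat x$ given $\sspace,\range$ of length $\cK{g}{\sspace,\range}+\Ordo(1)$. Thus $\cK{\hat x}{\sspace,\range}\leq\cK{g}{\sspace,\range}+c$ for a constant $c=\length(P)+\Ordo(1)$ depending only on the reference machine. Combining with the previous step yields $\lb\abs\sspace\leq\cK{\hat x}{\sspace,\range}\leq\cK{g}{\sspace,\range}+c$, that is, $\cK{g}{\sspace,\range}\geq\lb\abs\sspace-c$, as claimed.

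I expect the only genuine subtlety to be the treatment of $x_1$. Because $x_1$ is computable from $\sspace,\range$, it has $\Ordo(1)$ complexity and is therefore never incompressible once $\abs\sspace$ is large; this is precisely why the branch $S\subseteq\set{x_1}$ can occur only when $\lb\abs\sspace$ is bounded by a constant, a regime the additive $c$ already absorbs. Designing $P$ to prefer a non-$x_1$ maximiser, and then verifying that it nonetheless returns a point of $D$ in every case, is the one place where the hypotheses (and the exclusion of $x_1$ in the statement) are actually used.
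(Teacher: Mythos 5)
Your proof is correct and takes essentially the same approach as the paper's: a fixed constant-length procedure extracts from $g$ a maximiser lying in $D$ (preferring one different from $x_1$), whose incompressibility then forces $\cK{g}{\sspace,\range}\geq\lb\abs\sspace-c$ by the standard coding argument. The only difference is that you explicitly dispatch the degenerate case where every maximum sits at $x_1\in D$, a case the paper's \procedure{FirstMax} argument leaves implicit; your observation that this regime is absorbed by the additive constant is a slightly more careful rendering of the same proof.
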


\begin{proof}
Let $g$ be as in the Lemma statement, and
let $x_m\in\sspace-\{x_1\}$ be the first maximum of $g$ not at $x_1$.
Then $x_m$ can be coded by means of $g$ with constant length procedure
$\procedure{FirstMax}(g)$ that computes the first maximum not at $x_1$ for a
given function $g$. Hence $\pK{x_m}\leq \fK{g}+\length({\procedure{FirstMax}})+c$.
The constant $c$ depends only on the reference machine, and absorbs the cost
of initialising \procedure{FirstMax} with a provided description of $g$.

By assumption $x_m$ was incompressible, so $\pK{x_m}\geq \lb{|X|}$.
Combined and rearranged, this gives
$\fK{g}\geq \lb{|\sspace|} -\length({\procedure{FirstMax}})-c$.
The lemma now follows by absorbing $\length({\procedure{FirstMax}})$ into $c$.
\end{proof}

We are now ready for the proof of Theorem \ref{th:freemptm}
that shows that there is
free lunch for \mptm on the problem $\m$.
The key idea is to show that there is a trace after which two unexplored
points have different probability of being the maximum.

\def\thetheorem{\ref{th:freemptm}}
\begin{theorem}
There is free lunch for the problem $\m$ under the performance measure
$\mptm$ for all problem contexts with sufficiently large search space
(the required size depending on the reference machine only).
\end{theorem}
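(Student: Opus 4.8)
The plan is to produce two optimisers whose expected $\mptm$-performance under $\m$ differs, which by Definition~\ref{def:pmnfl} is exactly a free lunch. Fix a problem context, let $x_1$ be the first point of $\sspace$, and choose a second point $x_j\neq x_1$ (specified below). Let $a$ and $b$ probe every point of $\sspace\setminus\{x_1,x_j\}$ in one fixed common order, and then probe the last two points in opposite orders: $a$ probes $x_1$ at step $\abs\sspace-1$ and $x_j$ at step $\abs\sspace$, while $b$ probes $x_j$ at step $\abs\sspace-1$ and $x_1$ at step $\abs\sspace$. Since $\mptm$ records only the first step at which the top value $\max\range$ is observed, $a$ and $b$ return different times on a function $f$ exactly when $\max\range$ is first seen during those last two probes and the order of $x_1,x_j$ matters, i.e.\ precisely when $\max\range$ is attained at exactly one of $x_1,x_j$ and nowhere among the earlier points. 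On every other function (including those never attaining $\max\range$) the two optimisers agree, so such terms cancel. Writing $p(x)$ for the $\m$-probability that $\max\range$ is attained at $x$ and nowhere else, this gives
\[
\mptmcon^\m(b)-\mptmcon^\m(a)\;=\;p(x_1)-p(x_j),
\]
so it suffices to choose $x_j$ with $p(x_1)\neq p(x_j)$.

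First I would lower-bound $p(x_1)$ by a single simple witness. Let $f^\star$ be the ``needle at $x_1$'' function with $f^\star(x_1)=\max\range$ and $f^\star(x)=0$ everywhere else; since $\range$ contains $0$ and $1$ we have $0\neq\max\range$, so the unique maximiser of $f^\star$ is $x_1$ and $p(x_1)\geq\m(f^\star)$. Given $\sspace,\range$, the function $f^\star$ is described by a program of length independent of the context (``output $\max\range$ at the first point and $0$ elsewhere''), so $\fK{f^\star}\leq c_0$ for a constant $c_0$ depending only on the reference machine. Using $c_\mf\geq1$ this yields $p(x_1)\geq 2^{-c_0}$, a bound that does not shrink with the context.

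The crux is to make $p(x_j)$ small, and this is where the incompressibility machinery of the appendix enters. For an incompressible point $x_j$ (at least half of $\sspace$ is incompressible, so one exists once $\abs\sspace\geq 2$), every function whose sole maximiser is $x_j$ satisfies the hypotheses of Lemma~\ref{le:incfunc} with $D=\{x_j\}$, hence has complexity at least $\lb\abs\sspace-c$. I expect the whole difficulty to sit here: this per-function bound does \emph{not} by itself control $p(x_j)$, since there may be up to $(\abs\range-1)^{\abs\sspace-1}$ functions with unique maximiser $x_j$, and naively summing $c_\mf 2^{-\fK{\cdot}}$ over them is far too lossy and would only give a bound depending on $\abs\range$. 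To get a bound uniform in $\range$ I would instead control the whole event at once: the events ``$\max\range$ is attained only at $x$'', for $x\in\sspace$, are pairwise disjoint, so their $\m$-probabilities sum to at most $1$; averaging over the at least $\abs\sspace/2$ incompressible candidates shows some incompressible $x_j$ has $p(x_j)\leq 2/\abs\sspace$ (Lemma~\ref{le:incfunc} then merely corroborates that these rare functions are the genuinely complex ones, matching the appendix's framing and the exclusion of $x_1$, whose needle is cheap). Fixing this $x_j$, once $\abs\sspace>2^{c_0+1}$ — a threshold depending only on the reference machine through $c_0$ — we obtain $p(x_1)\geq 2^{-c_0}>2/\abs\sspace\geq p(x_j)$, so $\mptmcon^\m(a)\neq\mptmcon^\m(b)$ and a free lunch exists. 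The reduction of the first paragraph and the witness of the second are routine; the essential work is the $\range$-uniform bound on $p(x_j)$.
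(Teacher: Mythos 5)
Your proof is correct, and it takes a genuinely different route from the paper's. The paper uses \emph{adaptive} optimisers: it conditions on the event $G$ of observing all zeros on $Q=\sspace-D_k-\{x_1\}$, and then bounds the conditional probability of a maximum at an incompressible point $x_m$ via Lemma~\ref{le:incfunc} (functions whose maxima lie in incompressible points have complexity at least $\lb\abs\sspace-c$) combined with the cardinality bound $\abs{G_m}\leq\abs\range^{k+1}$, obtaining a gap of order $2^{-c_f}$ versus $\abs\range^{k+1}/\abs\sspace$; the two optimisers then differ only at step $\abs{Q}+1$, after the informative prefix. You instead use \emph{non-adaptive} optimisers that swap only the last two probes, derive the exact identity $\mptmcon^\m(b)-\mptmcon^\m(a)=p(x_1)-p(x_j)$, keep the same constant-complexity needle witness for $p(x_1)\geq 2^{-c_0}$, and---this is the real departure---replace the paper's entire complexity-lower-bound machinery by the disjointness observation $\sum_{x}p(x)\leq 1$ plus pigeonhole, giving some $x_j$ with $p(x_j)\leq 2/\abs\sspace$. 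This buys three things: the bound is uniform in $\range$ (the paper's \eqref{eq:fl:final} forces $\abs\sspace\gtrsim 2^{2c_f+c}c_\m\abs\range^{k+1}$, so its threshold implicitly depends on $\abs\range$, whereas your threshold $\abs\sspace>2^{c_0+1}$ matches the theorem's ``reference machine only'' claim more faithfully); Lemma~\ref{le:incfunc} becomes unnecessary (your incompressibility restriction is, as you half-admit, vestigial---averaging over all $x\neq x_1$ gives $p(x_j)\leq 1/(\abs\sspace-1)$ directly); and you get the slightly stronger conclusion that the free lunch under $\mptm$ is available already to non-adaptive optimisers, an interesting counterpoint to the discussion in Section~\ref{sec:partmeas} suggesting adaptivity may be needed to exploit fixed-measure free lunches. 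Two small tidying points, neither a gap: you must ensure the pigeonhole point satisfies $x_j\neq x_1$---either average over $x\neq x_1$, or note that $K(x_1|\sspace,\range)=O(1)$ excludes $x_1$ from the incompressible candidates once $\lb\abs\sspace$ exceeds a machine-dependent constant (or simply observe that $p(x_1)\geq 2^{-c_0}>2/\abs\sspace$ already forces the minimiser to differ from $x_1$); and since the paper leaves $\mptmcon$ undefined on functions never attaining $\max\range$, you rightly rely only on the fact that any convention assigns $a$ and $b$ equal values there, as the result vectors differ only in positions where $\max\range$ does not occur.
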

\addtocounter{theorem}{-1}

\begin{proof}
Let $k\geq 2$ and let \probcon be a problem context with $\abs\sspace\geq 2k$.
Let $D_k\subseteq X$ be of size $k$ and only include incompressible points.
Let $Q=\sspace-D_{k}-\{x_1\}$.
Let $G=\{g\in\range^\sspace\colon\,x\in Q\implies g(x)=0\}$ contain all functions
that are 0 on $Q$.
Let $f$ be 0 everywhere, except at $x_1$, where $f$ is 1.
The complexity of $f$ is upper bounded by a constant $c_f$ independent
of $X$. Since $f\in G$, we get $\mf(G)\geq\mf(f)\geq 2^{-c_f}$.

Let $x_m\in D_k$ be an incompressible point, and let
$G_m=\{g\in G\colon\,g(x_m)=\max g\}$. As the functions in $G$ are all 0
on $Q$, the cardinality of $G_m$ is at most $|Y|^{|X-Q|}=|Y|^{k+1}$.
Also, the functions in $G_m$ all have complexity above $\log|X|-c$ for
some $c$ independent of $X,Y$, by Lemma \ref{le:incfunc}.

We will now show that $\mf(G_m|G)$ tends to 0 with growing $|X|$,
while $\mf(f|G)$ remains bounded away from 0.
This will establish that
provided $G$, a maximum at $x_1$ is more likely than a maximum at $x_m$.
Provided $G$, the probability of a maximum at $x_1$ is always
above $2^{-c_f}$, since
$\mf(\max \text{ at } x_1|G)\geq \cmf{f}{G}\geq \mf(f) \geq 2^{-c_f}$.
A maximum at $x_m$, on the other hand, is less likely since only functions
in $G_m$ can have a maximum there:
\begin{align}
\mf(\max \text{ at }x_m|G)
	&=\mf(G_m)/\mf(G)\nonumber\\
	&\leq\mf(G_m)/ 2^{-c_f}\nonumber\\
        &=2^{c_f}\cdot c_{\m_\cont}\!\!\!\sum_{g\in G_m}\!\!\!2^{-\fK{g}}
        \label{eq:fl:fsteq}\\
\intertext{Using the lower bound on the complexity from Lemma \ref{le:incfunc}, \eqref{eq:fl:fsteq} is bounded by}
	&\leq 2^{c_f} \cdot c_{\m_\cont}\!\!\!\sum_{g\in  G_m} 2^{-\lb{|X|+c}}\nonumber\\
	&=2^{c_f}\cdot c_{\m_\cont} \abs{G_m}\cdot2^{-\lb{|X|+c}}\label{eq:fl:secineq}
\intertext{and since the cardinality of $G_1$ is less than $\abs{\range}^{k+1}$, \eqref{eq:fl:secineq} is bounded by}
	&\leq 2^{c_f}\cdot c_{\m_\cont}\cdot |\range|^{k+1}\cdot 2^{-\lb{|X|+c}}\nonumber\\
	&=\frac{2^{c_f+c}\cdot c_{\m_\cont} \cdot \abs{\range}^{k+1}}{|X|}\label{eq:fl:final}
\end{align}
the last equality by elementary simplification.

As $c_{\m_\cont}$ is bounded above by a constant $c_\m$ for all $X,Y$,
\eqref{eq:fl:final} goes to 0 with growing search space (and fixed
$k$ and \range). This shows that for large enough search spaces, $x_1$
is more likely to host a maximum than $x_m$.

Now all that remains is to use this to create two algorithms that perform
differently under \mptm. Let $a$ start by searching $Q$ in order.
If the perceived function points are consistent with $f$ (i.e., the event
$G$ is verified), then $a$ proceeds at $x_1$ and then at $x_m$,
whereafter $a$ searches the remaining points $\sspace-D_{k}-\{x_1\}$ in order.
If the trace is not consistent with $f$, then $a$ directly proceeds to
search all remaining points in order.
Define $b$ the same way, with the only exception that after $Q$
it searches $x_m$ before $x_1$ in case the trace is consistent with $f$.

This way, $a$ and $b$ will perform the same except when encountering a
function in $G$, in which case $a$ will have a strictly better chance
of finding the maximum at step $\abs{Q}+1$. If neither $a$ nor $b$
finds a maximum at step $\abs{Q}+1$, then neither $x_1$ nor $x_m$ is a
maximum, so neither $a$ nor $b$ will find a maximum at step $\abs{Q}+2$ either.
Finally, on step $\abs{Q}+3$
and onwards their behaviour will again be identical, and therefore also
their \mptm performance.
So $a$ has a strictly better chance at step $\abs{Q}+1$ and $a$ and $b$'s
performance is identical on all other steps and in all other situations.
This shows that there is a (possibly small) free lunch for \mptm on $\m$
for sufficiently large search spaces.
\end{proof}

\end{document}